\newtheorem{thm}{Theorem}[section]
\newtheorem{lem}[thm]{Lemma}
\newtheorem{prop}[thm]{Proposition}
\newtheorem{cor}[thm]{Corollary}
\newtheorem{defn}[thm]{Definition}
\newtheorem{rem}[thm]{Remark}
\numberwithin{equation}{section}
\crefname{thm}{theorem}{theorems}
\crefname{ass}{assumption}{assumptions}
\crefname{ex}{example}{examples}
\crefname{lem}{lemma}{lemmata}
\crefname{prop}{proposition}{propostions}
\crefname{cor}{corollary}{corollaries}
\crefname{defn}{definition}{definitions}
\crefname{rem}{remark}{remarks}
\let\symb\mathbf
\def\1{\symb{1}}
\def\CC{\mathcal{C}}
\def\D{\mathscr{D}}
\def\Cr{\mathscr{C}}     
\def\bF{{\symb F}}
\def\bW{{\symb X}}
\def\bX{{\symb X}}
\def\bW{{\symb W}}
\def\R{{\mathbb R}}
\def\OO{{\mathrm O}}
\let\div\undefined
\DeclareMathOperator{\div}{div}
\colorlet{darkblue}{blue!90!black}
\colorlet{darkred}{red!90!black}
\def\id{\mathrm{id}}
\DeclareMathOperator{\Sh}{Sh}
\def\slash{\unskip\kern0.2em/\penalty\exhyphenpenalty\kern0.2em\ignorespaces}
\def\dash{\unskip\kern0.2em--\penalty\exhyphenpenalty\kern0.2em\ignorespaces}
\author{Carlo~Bellingeri$^1$, Ana~Djurdjevac$^1$, Peter~K.~Friz$^{1,2}$, Nikolas~Tapia$^{1,2}$}
\date{\today}
\title{Transport and continuity equations with (very) rough noise}
\institute{$^1$TU Berlin, $^2$WIAS Berlin\\
\email{\{bellinge,djurdjev,friz,tapia\}@math.tu-berlin.de}}
\begin{document}
\maketitle

\begin{abstract}
Existence and uniqueness for rough flows, transport and continuity equations driven by general geometric rough paths are established.\\[.5ex]
\textbf{MSC (2020) Classification:} 35R60, 60L20, 60L50, 60H15.\\[.5ex] \textbf{Keywords:} rough transport equation, rough continuity equation, first order rough partial differential equations.\\
\end{abstract}

\tableofcontents

\section{Introduction}
We consider the {\it transport} equation, here posed (w.l.o.g.) as terminal value problem. This is, 
\begin{equation}\label{eqn:std_RHJ}
\begin{cases}
- \partial_t u (t, x)_{} = \displaystyle\sum_{i=1}^d f_i (x) \cdot D_x u (t, x)\dot{W}_t^i \equiv \Gamma u_t (x) \dot{W}_t \quad\text{ in }\quad(0,T)\times\R^{n},\\[2mm]
u=g \quad\text{ on }\quad\{T\}\times\R^{n}.
\end{cases}
\end{equation}
for fixed $T>0$, with vector fields $f = (f_1,\dotsc,f_d)$ driven by a $\CC^1$-driving signal $W=(W^1,\dotsc,W^d)$. The canonical pairing of $Du =D_x u = (\partial_{x^1}u,\dotsc,\partial_{x^n}u)$ with a vector field is indicated by a dot,
and we already used the operator / vector notation $$\Gamma_i = f_i (x) \cdot D_x, \ \ \Gamma = (\Gamma_1,\dotsc, \Gamma_d).$$ 
By the methods of characteristics, the unique (classical) $\CC^{1, 1}$ transport solution 
$u\colon[0, T] \times \R^n \rightarrow \R$, is given explicitly by
\begin{equ} u (s, x) = u (s, x ; W) : = g (X^{s, x}_T)  \;,\label{equ:C1TE}
\end{equ}
provided $g \in \CC^1$ and the vector fields $f_1,
\ldots, f_d$ are nice enough ($\CC^1_b$ will do) to ensure a $\CC^1$ solution flow for the ODE
\[\begin{cases}
    \dot{X}^{s,x}_t = \displaystyle\sum_{i = 1}^d f_i (X^{s,x}_t) \dot{W}^i_t \equiv f (X_t) \dot{W}_t,\\
    X^{s,x}_s = x\,.
\end{cases}\]
In turn, solving this ODE with random initial data induces a natural evolution of measures, given by the {\it continuity} - or {\it forward equation}
\[\begin{cases}
    \partial_t \rho = \displaystyle\sum_{i = 1}^d \div_x (f_i (x) \rho_t)\,\mathrm d W_t^i   \quad\text{ in }\quad(0,T)\times\R^{n},\\[2mm]
    \rho(0)=\mu \quad\text{ on }\quad\{0\}\times\R^{n} \,.
\end{cases}\]
Well-posedness of the ``trinity'' transport/flow/continuity will depend on the regularity of the data. For $W \in \CC^1$ we have an effective vector field
$$
           b (t,x) = \sum_{i = 1}^d f_i (x) \dot{W}^i_t
$$
which is continuous in $t \in [0,T]$ and inherits the regularity of $f$. In particular, $f \in \CC^1$ will be sufficient for a $\CC^{1,1}$-flow. 
In a landmark paper, DiPerna--Lions \cite{DiPerna1989} and then Ambrsosio \cite{Ambrosio_2004}, showed that the transport problem (weak solutions) is well-posed under bounds on $\div b$ (rather than $D_x b$) which in turn leads to a generalized flow. 
Another fundamental direction may be called {\it regularisation by noise}, based on the observation that generically $\dot{X} = f_0(X) + (noise)$ is much better behaved than the noise-free problem, see e.g. \cite{beck2019,Flandoli_2009, fl2016wellposedness, Catellier2016, catellier2016rough, Maurelli_2011}.

\medskip 

Our work is not concerning with DiPerna-Lions type analysis, nor regularisation by noise. In fact, our driving vector fields will be {\em very smooth}, to compensate for the the irregularity of the noise, which we here assumed to be {\em very rough}. (This trade-off is typical in rough paths and regularity structures.) 

Specifically, we continue a programme started independently by Bailleul--Gubinelli \cite{Bailleul2017} (see also \cite{Deya2019}) and Diehl et al. \cite{Diehl2017} and take $W$ as rough path, henceforth called $\bW$.  As in these works, we are interested in an intrinsic notion of solution. (Rough path stability of transport problems was already noted in \cite{CF09}). The contribution of this article is a treatment of rough noise of arbitrarily low regularity. Based on a suitable definition of solution, carefully introduced below, we can show

\begin{thm} \label{thm:intro}
  Assume $\bW$ is a weakly geometric rough path of H\"older regularity with exponent $\gamma \in (0,1]$. Assume $f$ has $2\lfloor\gamma^{-1}\rfloor+1$ bounded derivatives. Then there is a unique spatially regular (resp. measure-valued) solution to the rough transport (resp. continuity) equation with regular terminal data (resp. measure-valued initial data). 
\end{thm}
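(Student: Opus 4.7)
Let $N := \lfloor \gamma^{-1}\rfloor$, so that $\bW$ is characterised by iterated integrals $\bW^{(k)}_{s,t}$ of order $k=1,\dotsc,N$ with $|\bW^{(k)}_{s,t}| \lesssim |t-s|^{k\gamma}$. The plan is to attack the trinity in the order \emph{flow} $\to$ \emph{transport} $\to$ \emph{continuity}, anchoring the proof in the intrinsic Davie\dash type definition of solution introduced earlier in the paper. The first step is to solve the characteristic RDE $dX = f(X)\, d\bW$ by the Lyons\dash Davie theorem: under $f \in \CC^{2N+1}_b$ one obtains a globally defined flow $X^{s,x}_t$ of $\CC^{N+1}$ diffeomorphisms of $\R^n$, whose spatial derivatives up to order $N+1$ are bounded and $\gamma$-Hölder in time with the natural rough-path estimates. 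This regularity is precisely what is needed to apply iterated first-order operators $\Gamma_{i_1}\cdots\Gamma_{i_k}$ for $k \leq N$ to compositions $g\circ X^{\cdot,x}_T$, where the operators $\Gamma_i = f_i \cdot D_x$ each consume one derivative of $f$ and one of the function.

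For existence of the rough transport solution I take $u(s,x) := g(X^{s,x}_T)$. The intrinsic definition requires, for a fine partition of $[s,t]$, a Davie expansion of the form $u_s(x) - u_t(x) = \sum_{k=1}^{N} \bW^{(k);i_1\dots i_k}_{s,t}\,\Gamma_{i_1}\cdots\Gamma_{i_k}u_t(x) + \OO(|t-s|^{(N+1)\gamma})$. Using the flow property $u(s,x) = u(t, X^{s,x}_t)$ and Taylor-expanding $u(t,\cdot)$ around $x$, the identification of the coefficient of $\bW^{(k)}_{s,t}$ with $\Gamma_{i_1}\cdots\Gamma_{i_k}u_t(x)$ follows from the RDE expansion of $X^{s,x}_t$ and the chain rule, while the remainder is controlled by the boundedness of the $(N+1)$-st spatial derivatives of the flow. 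Uniqueness is the main obstacle. Given two intrinsic solutions $u^1, u^2$ with the same terminal data, I would form $v := u^1 - u^2$ and test it against the characteristic flow: the plan is to show that $t \mapsto v(t, X^{s,x}_t)$ is constant by composing the Davie expansion of $v$ with the rough-path expansion of $X^{s,x}_t$. Geometricity of $\bW$ (the shuffle relations between the $\bW^{(k)}$) is essential for the resulting iterated-integral sum to telescope, and the $2N+1$ derivatives on $f$ are consumed here: composing an order-$N$ expansion of $v$ with an order-$N$ expansion of the flow produces coefficients involving up to $2N$ derivatives of $f$, with one more derivative needed to close the remainder estimate.

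For the continuity equation I would set $\rho_t := (X^{0,\cdot}_t)_* \mu$ and test it against smooth $\varphi$ by duality with the transport equation solved backward by $(t,x)\mapsto \varphi(X^{t,x}_0)$ (which is available by time-reversal of the preceding results applied to $\varphi$). This produces the intrinsic weak formulation of the continuity equation with the required Davie remainder. Uniqueness of measure-valued continuity solutions follows by pairing any candidate $\rho_t$ with an arbitrary smooth transport solution $u_t$ and showing, again via the Davie expansions, that $t \mapsto \int u_t\, d\rho_t$ is conserved; varying the terminal data $g$ over a sufficiently rich class of test functions pins down $\rho_t$. The hard part of the whole program is the uniqueness step for the transport equation: in the absence of any Gronwall or contraction argument at this low regularity, one has to exploit simultaneously the algebraic structure of the geometric lift and the sharp doubling of regularity reflected in the hypothesis $f \in \CC^{2N+1}_b$.
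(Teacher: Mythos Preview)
Your overall architecture---flow $\to$ transport $\to$ continuity, with existence via $u(s,x)=g(X^{s,x}_T)$ and uniqueness via constancy of $t\mapsto u(t,X^{s,x}_t)$, then duality for the continuity equation---is exactly the paper's. Two points of divergence are worth noting.

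First, the uniqueness mechanism you sketch is not quite the paper's. The paper does \emph{not} ``compose an order-$N$ expansion of $v$ with an order-$N$ expansion of the flow'' and watch things telescope. Instead it proves, by \emph{reverse induction} on $|w|$, the full hierarchy of estimates $\Gamma_w u_t(X_t)-\Gamma_w u_s(X_s)=\OO(|t-s|^{(N_\gamma+1-|w|)\gamma})$: splitting $\Gamma_w u_t(X_t)-\Gamma_w u_s(X_s)$ into $[\Gamma_w u_t(X_t)-\Gamma_w u_s(X_t)]+[\Gamma_w u_s(X_t)-\Gamma_w u_s(X_s)]$ and using respectively the solution estimate \eqref{equ:estimate_sol} and the It\^o--Davie formula \eqref{equ:discrete_Ito} leaves a sum of terms $[\Gamma_{wv}u_t(X_t)-\Gamma_{wv}u_s(X_s)]\langle\bW_{st},e_v\rangle$ of strictly longer words, which is where the induction hypothesis enters. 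Your direct-cancellation picture can in fact be made to work (apply It\^o--Davie with $\phi=u_t$ rather than $\phi=u_s$, so that both expansions share the coefficients $\Gamma_w u_t(x)$, and use the flow property to reduce general $t_1<t_2$ to the starting-point case), and is arguably simpler; but as written your description does not make this precise, and the phrase ``iterated-integral sum telescopes'' obscures that the shuffle relations are really consumed in identifying the Taylor coefficients of $u_t(X^{s,x}_t)$ with $\Gamma_w u_t(x)$ (Lemma~\ref{lem:comb2}).

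Second, you locate the need for $f\in\CC^{2N_\gamma+1}_b$ in the uniqueness step. In the paper it is the \emph{existence} proof (Theorem~\ref{prop:RTEex}) that actually spends those derivatives: verifying the higher estimates \eqref{equ:estimate_sol} for $\Gamma_w u_s$ requires the Davie expansion of $\partial^\alpha X^{s,x}_t$ for $|\alpha|\le N_\gamma$ (Corollary~\ref{cor:partialdavie}), which in turn needs $f\in\CC^{N_\gamma+1+N_\gamma}_b$. The uniqueness argument, by contrast, only uses $u\in\CC^{\gamma,N_\gamma+1}$, the defining estimates \eqref{equ:estimate_sol}, and the It\^o--Davie formula for the RDE flow, for which $f\in\CC^{N_\gamma+1}_b$ suffices.
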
 
This should be compared with \cite{Bailleul2017, Diehl2017}, which both treat the ``level-$2$ case'', with H\"older noise of exponent $\gamma > 1/3$. Treating the general case, i.e. with arbitrarily small H\"older exponent, requires us in particular to fully quantify the interaction of iterated integrals, themselves constrained by shuffle-relations, and the controlled structure of the PDE problem at hand. In fact, the shuffle relations will be seen crucial to preserve the hyperbolic nature of the rough transport equation. This is different for (ordinary) rough differential equations where the shuffle relations can be discarded at the price of working with branched (think: Ito-type) rough paths. For what it's worth, our arguments restricted to the (well-known) level-$2$-case still contain some worthwhile simplifications with regard to the existing literature, e.g. by avoiding the analysis of an adjoint equation \cite{Diehl2017} and showing uniqueness for weak solutions of the continuity equations via a small class of test functions. On our way we also (have to) prove some facts on (controlled) geometric rough paths of independent interest, not (or only in the branched setting \cite{Trees,hairer2015geometric}) available in the literature.

{\bf Relation to existing works: } 
Unlike the case of rough transport equation, when it comes to stochastic constructions it is impossible to mention all related works stretching over more than four decades, from e.g.  Funaki \cite{funaki1979construction}, Ogawa \cite{ogawa1973partial} to recent works such as \cite{olivera2015density} with fractional noise and Russo--Valois integration.

The many benefits of a robust theory of stochastic partial differential equations, by combining a deterministic RPDE theory with Brownian and more general noise, are now well documented and need not be repeated in detail. Let us still recall one example of interest: multidimensional fractional Brownian motion admits a canonical geometric rough path lift (see e.g. \cite{FH2014}) $1/4 < \alpha$ < H, which constitutes an admissible rough noise for our rough transport and continuity equations. 
Various authors (see for example Unterberger \cite{Unterberger_2013}, Nualart and Tindel \cite{NT11}, etc.) have constructed ``renormalised'' canonical fractional Brownian rough paths for any $H>0$, fully covered by Theorem \ref{thm:intro}.

\subsection*{Notations}
We fix once and for all a time $T>0$.
In what follows we abbreviate estimates of the form $| (a) - (b) | \lesssim |t-s|^\gamma$ by writing $(a) \underset{\gamma}{=} (b)$.
Given $\gamma\in(0,1)$ we denote by $\CC^\gamma$ the classical Hölder space, i.e. consisting of functions $f\colon[0,T]\to\mathbb R$ such that
\[ \sup_{t\neq s}\frac{|f_t-f_s|}{|t-s|^\gamma}<\infty. \]
Throughout the paper we say {\em geometric} rough path, when we really mean {\em weakly geometric} rough path (since we only work with this type of rough path, the difference \cite{FrizVictoirNoteOnGRP} will not matter to us).

\medskip 

{\bf Acknowledgement.} CB has received funding from DFG research unit FOR2402, AD and NT have received  funding from Excellence Cluster MATH+ (AA4 and EF1, respectively). 
PKF has received funding from the European Research Council (ERC) under the European Union's Horizon 2020 research and innovation programme (grant agreement No. 683164).

\section{Rough paths}
\label{sse:rps}
We start by reviewing the definition of geometric rough paths of roughness $\gamma\in(0,1)$ and controlled rough paths.
We will do so in a Hopf-algebraic language following \cite{hairer2015geometric}, but before we will introduce some basic concepts.

A \emph{word of length $p\ge 1$} over the alphabet $\{1,\dotsc,d\}$ is a tuple $w=(i_1,\dotsc,i_p)\in\{1,\dotsc,d\}^p$, and we set $|w|\coloneq p$.
We denote by $\varepsilon$ the \emph{empty word}, which is by convention the unique word with zero length.
Given two non-empty words $v=(i_1,\dotsc,i_p)$ and $w=(i_{p+1},\dotsc,i_{p+q})$, we denote by $vw\coloneq(i_1,\dotsc,i_p,i_{p+1},\dotsc,i_{p+q})$ their \emph{concatenation}.
By definition $\varepsilon w=w\varepsilon=w$.
We observe that in any case $|vw|=|v|+|w|$.
The concatenation product is associative but not commutative.

The symmetric group $\mathbb S_p$ acts on words of length $p$ by permutation of its entries, that is, $\sigma.w\coloneq(i_{\sigma(1)},\dotsc,i_{\sigma(p)})$.
Given two integers $p,q\ge 1$, a \emph{$(p,q)$-shuffle} is a permutation $\sigma\in\mathbb S_{p+q}$ such that
\begin{equation*}
  \sigma(1)<\sigma(2)<\dotsb<\sigma(p)\enspace\text{and}\enspace\sigma(p+1)<\sigma(p+2)<\dotsb<\sigma(p+q).
\end{equation*}
We denote by $\Sh(p,q)$ the set of all $(p,q)$-shuffles.

\subsection{The shuffle algebra}
The shuffle product was introduced by Ree \cite{Ree} to study the combinatorial properties of iterated integrals, following K.-T. Chen's work.
Let $d\ge 1$ be fixed, and consider the tensor algebra $H$ over $\R^d$, which is defined to be the direct sum
\[ H\coloneq\bigoplus_{p=0}^{\infty} (\R^d)^{\otimes p}. \]
A linear basis for $H$ is given by pure tensors $e_{i_1}\otimes\dotsm\otimes e_{i_p}$, $p\ge 1$ where $\{e_1,\dotsc,e_d\}$ is a basis of $\R^d$, and the additional element $\mathbf{1}$ which  generates $\mathbb{R}^{\otimes 0}\coloneq\R\1$.
In order to ease the notation we denote, for a word $w=(i_1,\dotsc,i_p)$, $e_w\coloneq e_{i_1}\otimes e_{i_2}\otimes\dotsm\otimes e_{i_p}$.
By definition, the set $\{e_w:|w|=p\}$ is a linear basis for $(\R^d)^{\otimes p}$ for any $p\ge 0$.

The space $H$ is endowed with a  product $\shuffle\colon H\otimes H\to H$, called the \emph{shuffle product}, defined on pure tensors as
\[ e_{i_1\dotsm i_p}\shuffle e_{i_{p+1}\dotsm i_{p+q}}=\sum_{\sigma\in\Sh(p,q)}e_{\sigma.(i_1,\dotsc,i_{p+q})}. \]
There is also another operation, called the \emph{deconcatenation coproduct} $\Delta\colon H\to H\otimes H$, defined by
\begin{equation}
  \Delta e_w\coloneq\sum_{uv=w}e_u\otimes e_v.
  \label{eqn:coprod}
\end{equation}
The shuffle product and the deconcatenation coproduct satisfy a compatibility relation (which will not play any role in the sequel), turning the tripe $(H,\shuffle,\Delta)$ into a graded connected bialgebra.
This implies the existence of a linear map $S\colon H\to H$, called the \emph{antipode}, turning $(H,\shuffle,\Delta,S)$ into a Hopf algebra.
In our particular setting, $S$ can be explicitly computed on basis elements by $S(e_{i_1\dotsm i_p})=(-1)^pe_{i_p\dotsm i_1}$.

The coproduct endows the dual space $H^*$ with an algebra structure via the \emph{convolution product} given, for $g,h\in H^*$, by
\[\langle g\star h, x\rangle\coloneq\langle g\otimes h,\Delta x\rangle. \]
On pure tensor this yields
\[ \langle g\star h,e_w\rangle=\sum_{uv=w}\langle g,e_u\rangle\langle h,e_v\rangle. \]
A \emph{character} is a linear map $g\in H^*$ such that $\langle g,x\shuffle y\rangle=\langle g,x\rangle\langle g,y\rangle$ for all $x,y\in H$.
It is a standard result (see e.g. \cite{Man2008}) that the collection of all characters on $H$ forms a group $G$ under the convolution product whose identity is the function $\mathbf{1}^*\in H^*$, defined  by $\mathbf{1}^*(e_b)=0$ for every word $b$ and $\mathbf{1}^*(\mathbf{1})=1$.
The inverse of an element $g\in G$ can be computed by using the antipode: $g^{-1}=g\circ S$.

Given $N\ge 0$, we consider the \emph{step-$N$ truncated tensor algebra}
\[ H_N=\bigoplus_{p=0}^N(\R^d)^{\otimes p}. \]
\begin{defn}A \emph{step-$N$ truncated character} is a linear map $g\in H_N^*$ such that
  \begin{equation}\label{equ:character}
  \langle g,x\shuffle y\rangle=\langle g,x\rangle\langle g,y\rangle
\end{equation}
for all $x\in(\R^d)^{\otimes p}$ and $y\in(\R^d)^{\otimes q}$ with $p+q\le N$.
\end{defn}
It is not hard to show that the set $G^{(N)}$ of all step-$N$ truncated characters is also a group under the convolution product, whose identity is again $\mathbf{1}^*$.
Denoting by $e_1^*,\dotsc,e^*_d$ the basis of $\R^d$ dual to $\{e_1,\dotsc,e_d\}$, we introduce the dual basis $(e_a^*)$ of $H_N^*$ in the canonical way, that is, for a word $w$ we denote by $e_w^*$ the unique linear map on $H_N$ such that
\[ \langle e_w^*,e_v\rangle=\delta_w(v). \]
The convolution product of two of these basis elements can be explicitly computed.
Indeed, by definition
\[ \langle e_u^*\star e_v^*,e_w\rangle=\sum_{u'v'=w}\langle e_u^*,e_{u'}\rangle\langle e_v^*,e_{v'}\rangle \]
which is nonzero if and only if $w=uv$, in which case $\langle e_u^*\star e_v^*,e_w\rangle=1$.
Therefore $e_u^*\star e_v^*=e_{uv}^*$.
For this reason this product is also known as the \emph{concatenation product}.

\subsection{Geometric rough paths}
We now recall the notion of geometric rough paths.
The group $G^{(N)}$ can be endowed with a \emph{sub-additive homogeneous norm} $\|\cdot\|_N\colon G^{(N)}\to\R_+$, see \cite{LV07extension} for further details.
This allows us to define a left invariant metric on $G^{(N)}$ by setting
\[ d_N(g,h)\coloneq\|h^{-1}g\|_N. \]

\begin{defn}
  Let ${N_{\gamma}}\coloneq \lfloor\gamma^{-1}\rfloor$ denote the integer part of $\gamma^{-1}$. A \emph{geometric rough path of regularity $\gamma$} is a $\gamma$-Hölder path $\bW\colon[0,T]\to(G^{(N_\gamma)},d_N)$.
The set of all geometric rough paths of regularity $\gamma$ will be denoted by $\Cr^\gamma$.
\label{defn:grp}
\end{defn}

By definition of the increments $\bW_{st}\coloneq \bW_s^{-1}\star\bW_t$ satisfy the so-called Chen's relations
\begin{equation}\label{equ:chen}
\bW_{st}=\bW_{su}\star\bW_{ut}
\end{equation}
for all $0\le s,u,t\le T$.
Moreover, by construction of the homogeneous norm $\Vert \cdot \Vert_{N}$, for any word $w$  such that $|w|\leq N_{\gamma}$ one has
\begin{equation}\label{equ:Holder}
\sup_{t\neq s}\frac{|\langle\bW_{st},e_w \rangle|}{|t-s|^{|w|\gamma}}<\infty.
\end{equation}

\subsection{Controlled rough paths and rough integrals}
One of the main goals of rough paths theory is to give meaning to solutions of controlled equations of the form
\begin{equation}\label{eqn:RDE}
  \mathrm dX_t=\sum_{i=1}^df_i(X_t)\,\mathrm d\bW^i_t,
\end{equation}
for some collection of sufficiently regular vector fields $f_1,\dotsc, f_d$ on $\R^n$ and where the driving signals $W^1,\dotsc, W^d$ are very irregular. The general philosophy is that if the smoothness of the vector fields compensates the lack of regularity of the driving signals, then we can still have existence of solutions given that we reinterpret the equation in the appropriate sense. The central ingredient for proving this kind of results is the notion of \emph{controlled rough path} which we now recall.

\begin{defn}[\cite{Trees,FH2014}]\label{defn:cRP}
  Let $\bW\in \Cr^{\gamma}$ and $1\leq N\leq N_{\gamma}+1$.  A \emph{rough path controlled by $\bW$} is a path $\bX\colon[0,T]\to H_{N-1}$ if for any word $w$ such that $\vert w\vert \leq N-1$ the path $t\mapsto\langle e_w^*,\bX_t\rangle\in\CC^\gamma$ and
\begin{equation} \label{eqn:ctrlbound}
\langle e_w^*,\bX_t\rangle\underset{(N-|w|)\gamma}{=}\langle\bW_{st}\star e_w^*,\bX_s\rangle\,.
\end{equation}
for all $s<t$. We denote by $\D^{N\gamma}_{\bW}$ the (vector) space of paths $\bX$ satisfying \eqref{eqn:ctrlbound}.

We say that a path $X\colon[0,T]\to\R$ is \emph{controlled by $\bW$} if there exists a controlled path $\bX\in\D^{N\gamma}_{\bW}$ such that $\langle\1^*,\bX_t\rangle=X_t$; we call $\bX$ a controlled rough path above (the controlled path) $X$.
\end{defn}

\begin{rem}
  The definition in \cite{FH2014} seems more restrictive in that one always take $N=N_\gamma$, which is the minimal value of $N$ required for rough integration. The case $N=N_\gamma+1$ is convenient to keep track of the additional information obtained by rough integration, see \Cref{rem:rintbound}.
\end{rem}

\begin{rem}
Alternatively, by writing $\bX$ and $\bW$ as the sums
\[\bX_s=\sum_{|w|\le N-1}\langle e_w^*,\bX_s\rangle e_w,\quad \bW_{st}=\sum_{|v|\le N}\langle \bW_{st}, e_v\rangle e_v^*,\]
the condition in \cref{eqn:ctrlbound} can be explicitly written
\begin{equation}\label{eqn:ctrlcoord}
\langle e_w^*,\bX_t\rangle \underset{(N-|w|)\gamma}{=}\sum_{0\leq |v|\le N-|w|}\langle e_{wv}^*,\bX_s\rangle\langle\bW_{st},e_v\rangle,
\end{equation}
for any word $w$.
\label{rem:coordinates}
\end{rem}
By construction of the vector space $\D^{N\gamma}_{\bW}$, the quantity
\[\lVert\mathbf{X} \rVert_{\bW;N\gamma}:= \sum_{0\leq \vert w\vert <N} \sup_{s<t}\frac{|\langle e_w^*,\bX_t\rangle-\langle\bW_{st}\star e_w^*,\bX_s\rangle|}{|t-s|^{(N- \vert w \vert)\gamma}}\,,\]
is finite for any $\bX\in \D^{N\gamma}_{\bW}$. We can easily show that $ \Vert\cdot \Vert_{\D^{N\gamma}_{\bW}} $ is a seminorm and, it becomes a Banach space under the norm
\[ \lVert\bX\rVert_{\D^{N\gamma}_{\bW}}\coloneq\max_{|w|\le N-1}|\langle e_w^*,\bX_0\rangle|+\Vert\bX \Vert_{\bW;N\gamma}.  \]

We extend the notion of controlled rough path above a vector-valued path $X\colon [0,T]\to \mathbb{R}^n$. In this case, the path $\bX$ takes values in $(H_{N-1})^n$, that is, each component path $\langle e_w^*,\bX\rangle$ is a vector of $\R^n$, which we denote by
\[ \langle e_w^*,\bX\rangle=(\langle e_w^*,\bX\rangle_1,\dotsc,\langle e_w^*,\bX\rangle_n). \]
Then we require the bound in \cref{eqn:ctrlbound} to hold componentwise, or equivalently, we can replace the absolute value of the left-hand side by any norm on $\R^n$. We denote this space by $(\D^{N\gamma}_{\bW})^{n}$.

Using the higher-order information contained in the controlled rough path $\mathbf{X}\in \D^{N\gamma}_{\bW}$, we recall the rigorous notion of rough integral of $\bX$ against $\bW$. For its proof see \cite{FH2014}.
\begin{thm}
  Let $\bW\in \Cr^{\gamma}$ and $\bX\in \D^{N_{\gamma}\gamma}_{\bW}$. For every $i\in \{1,\dotsc,d\}$ there exists a unique real valued path in $\CC^{\gamma}$

\begin{equation}\label{eqn:rintdef}
t\mapsto\int_0^t X_u\,\mathrm d\bW^i_u\coloneq\lim_{|\pi|\to 0}\sum_{[a,b]\in\pi}\sum_{0\leq |w|\le N_{\gamma}-1}\langle e^*_w,\bX_a\rangle \langle\bW_{ab}, e_{wi}\rangle,
\end{equation}
where $\pi$ is a sequence of partitions of $[0,t]$ whose mesh $|\pi|$ converges to $0$. We call it\emph{ the rough integral of $X$ with respect to $W^i$}. Moreover one has the estimate
\begin{equation}\label{equ:rough_int_est}
  \int_0^tX_u\,\mathrm d\bW^i_u- \int_0^sX_u\,\mathrm d\bW^i_u\eqcolon\int_s^tX_u\,\mathrm d\bW^i_u\underset{(N_{\gamma}+1) \gamma}{=}\sum_{0<|w|\leq  N_{\gamma}}\langle e^*_w,\bX_s\rangle \langle\bW_{st}, e_{wi}\rangle,
\end{equation}
for any $s<t$. Introducing the function $\int_0^{\cdot}\bX_u\,\mathrm d\bW^i_u\colon [0,T]\to H_{N_{\gamma}}$ given by
\begin{equation}\label{eqn:roughint_coef}
\left\langle\1^*,\int_0^t\bX_u\,\mathrm d\bW^i_u\right\rangle\coloneq\int_0^tX_u\,\mathrm d\bW^i_u\,,\quad\left\langle e_{wi}^*,\int_0^t\bX_u\,\mathrm d\bW^i_u\right\rangle\coloneq\langle e_w^*,\bX_t\rangle\,
\end{equation}
and zero elsewhere, one has  $\int_0^{\cdot}\bX_u\,\mathrm d\bW^i_u\in \D^{(N_{\gamma}+1)\gamma}_{\bW}$.
\end{thm}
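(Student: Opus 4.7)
The strategy is a direct application of the sewing lemma to the compensator
\[ \Xi_{s,t} \coloneq \sum_{0\le |w|\le N_\gamma-1} \langle e_w^*,\bX_s\rangle\,\langle \bW_{st},e_{wi}\rangle, \]
which is the generic summand of the Riemann sums in \eqref{eqn:rintdef}. Uniqueness and the Hölder regularity of $t\mapsto \int_0^t X_u\,\mathrm d\bW^i_u$ are then standard consequences of sewing, and the only real work is to check the required almost-additivity with exponent $(N_\gamma+1)\gamma$, which is indeed greater than $1$ by the definition $N_\gamma = \lfloor\gamma^{-1}\rfloor$.

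The key algebraic step is to compute $\delta\Xi_{s,u,t}\coloneq \Xi_{s,t}-\Xi_{s,u}-\Xi_{u,t}$ for $s<u<t$. Applying Chen's relation \eqref{equ:chen} in the form $\langle \bW_{st},e_{wi}\rangle = \sum_{ab=wi}\langle \bW_{su},e_a\rangle\langle \bW_{ut},e_b\rangle$ and isolating the $b=\varepsilon$ contribution, one finds after relabeling $w=ab'$ (with $b=b'i$) that
\[ \Xi_{s,t}-\Xi_{s,u} = \sum_{|ab'|\le N_\gamma-1}\langle e_{ab'}^*,\bX_s\rangle\,\langle \bW_{su},e_a\rangle\,\langle \bW_{ut},e_{b'i}\rangle. \]
Regrouping the $a$-sum for fixed $b'$ and comparing with $\Xi_{u,t}=\sum_{|b'|\le N_\gamma-1}\langle e_{b'}^*,\bX_u\rangle\langle\bW_{ut},e_{b'i}\rangle$, the controlled rough path condition \eqref{eqn:ctrlbound} (written in coordinates as in \Cref{rem:coordinates}) turns the remaining bracket into the controlled remainder
\[ R^{b'}_{s,u} \coloneq \langle e_{b'}^*,\bX_u\rangle - \sum_{|a|\le N_\gamma-1-|b'|}\langle e_{ab'}^*,\bX_s\rangle\langle \bW_{su},e_a\rangle, \qquad |R^{b'}_{s,u}|\lesssim |u-s|^{(N_\gamma-|b'|)\gamma}. \]
Thus
\[ \delta\Xi_{s,u,t} = -\sum_{|b'|\le N_\gamma-1} R^{b'}_{s,u}\,\langle \bW_{ut},e_{b'i}\rangle. \]

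Combining this with the Hölder estimate \eqref{equ:Holder} on $\bW$, each term is bounded by $|u-s|^{(N_\gamma-|b'|)\gamma}|t-u|^{(|b'|+1)\gamma}\le |t-s|^{(N_\gamma+1)\gamma}$. Since $(N_\gamma+1)\gamma>1$, the sewing lemma (in the version recalled in \cite{FH2014}) yields a unique path $I\colon [0,T]\to\R$, continuous with $I_0=0$, such that $|I_t-I_s-\Xi_{s,t}|\lesssim |t-s|^{(N_\gamma+1)\gamma}$, and moreover $I_t = \lim_{|\pi|\to 0}\sum_{[a,b]\in\pi}\Xi_{a,b}$, which is exactly \eqref{eqn:rintdef}. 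The increment bound \eqref{equ:rough_int_est} follows directly from this estimate, and the $\gamma$-Hölder regularity follows from the dominant $|w|=0$ contribution $X_s\langle\bW_{st},e_i\rangle$.

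Finally, to verify that the lifted path defined by \eqref{eqn:roughint_coef} belongs to $\D^{(N_\gamma+1)\gamma}_{\bW}$, it suffices to test the controlled path identity \eqref{eqn:ctrlbound} against each basis functional $e_v^*$ with $|v|\le N_\gamma$. For $v=wi$ with $|w|\le N_\gamma-1$ the identity follows from \eqref{eqn:ctrlbound} applied to $\bX$; for $v=\varepsilon$ it reduces to \eqref{equ:rough_int_est}; and for words $v$ not ending in $i$ it is trivial since the corresponding component is set to zero. The main obstacle in the argument is the algebraic reorganisation of $\delta\Xi$, which is where Chen's relation and the controlled remainders conspire to produce the required sewing exponent; once that identity is established, everything else is routine.
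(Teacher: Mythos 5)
Your proposal is correct, and it is essentially the standard sewing-lemma argument that the paper itself does not reproduce but delegates to \cite{FH2014}: the compensator $\Xi_{s,t}=\sum_{|w|\le N_\gamma-1}\langle e_w^*,\bX_s\rangle\langle\bW_{st},e_{wi}\rangle$, the computation of $\delta\Xi$ via Chen's relation, and the identification of the bracket with the controlled remainder are exactly right. One small remark: your remainder uses the ``prepended'' form $\langle\bW_{su}\star e_{b'}^*,\bX_s\rangle=\sum_a\langle\bW_{su},e_a\rangle\langle e_{ab'}^*,\bX_s\rangle$, which is the convention forced by $e_u^*\star e_v^*=e_{uv}^*$ in \Cref{defn:cRP} and the one that makes the algebra close, rather than the ``appended'' form as printed in \Cref{rem:coordinates}; this is a (harmless) inconsistency in the paper, not in your argument.
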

\begin{rem}
  \label{rem:rintbound}
  Differently from the general definition of the $\D^{N\gamma}_{\bW}$ spaces, in order to define the rough integral it is necessary to start from a controlled rough path $\bX\in \D^{N_{\gamma}\gamma}_{\bW}$. The operation of integration on controlled rough path comes also with some quantitative bounds. Looking at the definition, it is also possible to prove there exists a constant $C(T, \gamma, \bW)>0$ depending on $T$, $\gamma$, $\bW$ such that
  \[\left\Vert \int_0^{\cdot}\bX_u\,\mathrm d\bW^i_u  \right\Vert_{\D^{(N_\gamma+1)\gamma}_{\bW}}\leq C(T, \gamma, \bW)\lVert\bX\rVert_{\D^{N_\gamma\gamma}_{\bW}}.\]
  Therefore the application $\bX\mapsto \int \bX\,\mathrm d\bW^i$ is a continuous linear map.
\end{rem}

The second operation we introduce is the composition of a controlled rough path and a smooth function. Given a smooth function $\phi\colon\R^n\to\R$, its $k$-th derivative at $x\in\R^n$ is the multilinear map $D^k\phi(x)\colon(\R^n)^{\otimes k}\to\R$ such that for $v^1,\dotsc,v^k\in\R^n$,
\begin{equation}
  D^k\phi(x)(v^1,\dotsc,v^k)=\sum_{\alpha_1,\dotsc,\alpha_k=1}^n\frac{\partial^k\phi}{\partial x_{\alpha_1}\dotsm\partial x_{\alpha_k}}(x)v_{\alpha_1}^1\dotsm v_{\alpha_k}^k.
  \label{eqn:diffk}
\end{equation}

To ease notation we define
\[ \partial^\alpha \phi(x)\coloneq\frac{\partial^k\phi}{\partial x_{\alpha_1}\dotsm\partial x_{\alpha_k}}(x) = \frac{\partial^k\phi}{\partial x^{i_1}_{1}\dotsm\partial x^{i_n}_{n}}(x) \]
for a word $\alpha=(\alpha_1,\dotsc,\alpha_k)\in\{1,\dotsc,n\}^k$; of course, such $\alpha$ induces a multi-index $i=(i_1,\dotsc,i_n)\in\mathbb N^n$, where $i_j$ counts the number of entries of $\alpha$ that equal $j$.

We note that $D^k\phi(x)$ is symmetric, meaning that for any permutation $\sigma\in\mathbb S_k$ we have that $D^k\phi(x)(v^1,\dotsc,v^k)=D^k\phi(x)(v^{\sigma(1)},\dotsc,v^{\sigma(k)}).$

\begin{rem}
  Observe that we also use the notion of word in this case, albeit with a different alphabet.
  In order to avoid confusion we reserve latin letters such as $u,v,w$, etc for words on the alphabet $\{1,\dotsc,d\}$, introduced in the beginning of \Cref{sse:rps}, and greek letters such as $\alpha,\beta$, etc for words on the alphabet $\{1,\dotsc,n\}$ as above.
\end{rem}

With these notations, Taylor's theorem states that if $\phi\colon\R^n\to\R^m$ is of class $\CC^{r+1}(\R^n, \R^m)$ then for any $j=1,\dotsc,m$ one has the identity
\begin{equation}\label{Taylor_thm}
  \phi^j(y)=\sum_{k=0}^{r}\frac{1}{k!}D^k\phi^j(x)\left( (y-x)^{\otimes k} \right)+\OO(|y-x|^{r+1})
 \end{equation}

In what follows, for any finite number of words $u_1,\dotsc,u_k$ we introduce the set
\[ \Sh(u_1,\dotsc,u_k)\coloneq\{w:\langle e_w^*,e_{u_1}\shuffle \dotsc \shuffle e_{u_k}\rangle\neq 0\}. \]
Since the shuffle product is commutative, for any permutation $\sigma\in\mathbb S_k$ we have that
\[ \Sh(u_1,\dotsc,u_k)=\Sh(u_{\sigma(1)},\dotsc,u_{\sigma(k)}). \]

Thanks to this notation, we can prove Faà di Bruno's formula (see also \cite{MR2200529}).
We denote by $\mathcal P(m)$ the collection of all partitions of $\{1,\dotsc,m\}$.
Given $\pi=\{B_1,\dotsc,B_p\}\in\mathcal P(m)$, we let $\#\pi\coloneq p$ denote the number of its blocks, and for each block we denote by $|B|$ its cardinality.

\begin{lem}
  \label{lem:faadibruno}
  For any couple of functions $g\colon \R^n\to \R^n$ and $f\colon \R^n\to \R$ sufficiently smooth and every $m\ge 1$, letting $h\coloneq f\circ g$ one has the identity
  \[ D^mh(x)(v_1,\dotsc,v_m)=\sum_{\pi\in\mathcal P(m)}D^{\#\pi}f(g(x))(D^{|B_1|}g(x)(v_{B_1}),\dotsc,D^{|B_p|}g(x)(v_{B_p})) \]
  where $v_B\coloneq(v_{i_1},\dotsc,v_{i_q})$ for $B=\{i_1,\dotsc,i_q\}$.

  In particular, for any word $\alpha=(\alpha_1,\dotsc,\alpha_m)$ we have
  \begin{equation}\label{Faa_di_bruno}
    \partial^{\alpha}h(x)=\sum_{k=1}^{m}\frac{1}{k!}\sum_{\substack{\beta_1,\dotsc,\beta_k \\ \alpha\in\operatorname{Sh}(\beta_1,\dotsc,\beta_k)}}D^kf(g(x))(\partial^{\beta_1}g(x),\dotsc,\partial^{\beta_k}g(x)).
  \end{equation}
\end{lem}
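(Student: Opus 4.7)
The plan is to prove the multilinear identity by induction on $m\ge 1$ and then derive the coordinate form \eqref{Faa_di_bruno} by specialization. The base case $m=1$ is the chain rule: the unique partition $\{\{1\}\}$ of $\{1\}$ contributes $Df(g(x))(Dg(x)(v_1))$, which is $Dh(x)(v_1)$.

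For the inductive step, I would assume the formula at step $m$ and differentiate its right-hand side in the direction $v_{m+1}$ at $x$. Leibniz and the chain rule, applied to each summand indexed by $\pi=\{B_1,\dotsc,B_k\}\in\mathcal P(m)$, yield two types of contributions. Differentiating the outer factor $D^kf(g(\cdot))$ produces a $D^{k+1}f$-term whose extra argument is $Dg(x)(v_{m+1})$; this matches the partition of $\{1,\dotsc,m+1\}$ obtained by adjoining $\{m+1\}$ as a new singleton block to $\pi$. Differentiating the $j$-th inner factor $D^{|B_j|}g(x)(v_{B_j})$ upgrades it to $D^{|B_j|+1}g(x)(v_{B_j},v_{m+1})$, which matches the partition where $m+1$ is inserted into $B_j$. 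Since every partition $\pi'\in\mathcal P(m+1)$ is uniquely recovered from the pair (partition of $\{1,\dotsc,m\}$ obtained by deleting $m+1$, block of $\pi'$ containing $m+1$), the two families of contributions enumerate $\mathcal P(m+1)$ exactly once; the symmetry of $D^\ell f$ and $D^\ell g$ in their arguments absorbs the reordering of block labels, and the induction closes.

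To pass to the coordinate form, I would substitute $v_j=e_{\alpha_j}$ into the multilinear identity; the $j$-th inner factor becomes $\partial^{\beta_j}g(x)$ with $\beta_j\coloneq\alpha|_{B_j}$ the subword of $\alpha$ indexed by $B_j$. Writing the sum over unordered partitions as $\tfrac{1}{k!}$ times the sum over ordered partitions $(B_1,\dotsc,B_k)$, I would re-parameterize by the tuple $(\beta_1,\dotsc,\beta_k)$: the number of ordered partitions producing a fixed tuple equals the shuffle coefficient $\langle e_\alpha^*,e_{\beta_1}\shuffle\dotsm\shuffle e_{\beta_k}\rangle$, which is non-zero exactly when $\alpha\in\Sh(\beta_1,\dotsc,\beta_k)$. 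Combined with the symmetry of $D^kf$ in its arguments, this yields \eqref{Faa_di_bruno}.

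The only genuine obstacle is the bijective bookkeeping in the inductive step—verifying that adjoining a new singleton and inserting $m+1$ into an existing block together enumerate $\mathcal P(m+1)$ without repetition. Everything else is routine differentiation and a rearrangement of the sum, for which the Hopf-algebraic shuffle language of \Cref{sse:rps} is perfectly tailored.
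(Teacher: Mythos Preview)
Your proposal is correct and follows essentially the same route as the paper's own proof: induction on $m$ for the multilinear partition identity (using precisely the bijection you describe between $\mathcal P(m+1)$ and pairs consisting of a partition in $\mathcal P(m)$ together with either an existing block receiving $m{+}1$ or a new singleton $\{m{+}1\}$), followed by specialization $v_j=e_{\alpha_j}$ and the rewriting of the sum over unordered partitions as $\tfrac{1}{k!}$ times a sum over ordered tuples of subwords. The paper's argument is the same in both structure and detail; your identification of the multiplicity with the shuffle coefficient is in fact slightly more explicit than the paper's corresponding passage.
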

\begin{proof}
  We proceed by induction on $m$. For $m=1$ the formula reads
  \[ Dh(x)v=Df(g(x))Dg(x)v \]
  which is the usual chain rule.
  Suppose the formula holds for some $m\ge 1$. Then, applying the chain rule to each of the terms we get
  \begin{align*}
    D^{m+1}h(x)(v_1,\dotsc,v_{m+1})&=\sum_{\pi\in\mathcal P(m)}\sum_{l=1}^{k}D^{\#\pi+1}f(g(x))\left( D^{|B_1|}g(x)v_{B_1},\dotsc,D^{|B_l|+1}g(x)(v_{B_l},v_{m+1}),\dotsc,D^{|B_k|}g(x)v_{B_k} \right)\\
      &\qquad+\sum_{\pi\in\mathcal P(m)}D^{\#\pi+1}f(g(x))(D^{|B_1|}g(x)v_{B_1},\dotsc,D^{|B_k|}g(x)v_{B_k},Dg(x)v_{m+1})\\
      &= \sum_{\pi'\in\mathcal P(m+1)}D^{\#\pi'}f(g(x))\left( D^{|B'_1|}g(x)(v_{B'_1}),\dotsc,D^{|B'_p|}g(x)(v_{B'_{k'}}) \right)
  \end{align*}
  where the last identity follows from the fact that every partition $\pi'\in\mathcal P(m+1)$ can be obtained by either appending $m+1$ to one of the blocks of some partition $\pi\in\mathcal P(m)$ or by adding the singleton block $\{m+1\}$ to it.

  Given a word $\alpha=(\alpha_1,\dotsc,\alpha_m)$, we evaluate the previous formula in the canonical basis vectors $v_1=e_{\alpha_1},\dotsc,v_m=e_{\alpha_m}$ to obtain
  \begin{align*}
    \partial^\alpha h(x)&= D^mh(x)(v_1,\dotsc,v_m)\\
    &= \sum_{\pi\in\mathcal P(m)}D^{\#\pi}f(g(x))\left( \partial^{\alpha_{B_1}}g(x),\dotsc,\partial^{\alpha_{B_k}}g(x) \right)
  \end{align*}
  where $\alpha_{B}=(\alpha_{i_1},\dotsc,\alpha_{i_q})$ if $B=\{i_1,\dotsc,i_p\}$.
  It is now clear that for any choice of $\pi\in\mathcal P(m)$ the words $\alpha_{B_1},\dotsc,\alpha_{B_k}$ satisfy $\alpha\in\Sh(\alpha_{B_1},\dotsc,\alpha_{B_k})$.
  Conversely, if $\alpha\in\Sh(\beta_1,\dotsc,\beta_k)$, there is a partition $\pi=\{B_1,\dotsc,B_k\}$ with $B_j=$ is such that $\beta_j=\alpha_{B_j}$.
  Moreover, for any choice of such a partition, any of the $k!$ permutations of its blocks result in the same evaluation by symmetry of the differential.
  Thus
  \[ \partial^\alpha h(x)=\sum_{k=1}^m\frac{1}{k!}\sum_{\substack{\beta_1,\dotsc,\beta_k\\ \alpha\in\Sh(\beta_1,\dotsc,\beta_k)}}D^kf(g(x))(\partial^{\beta_1}g(x),\dotsc,\partial^{\beta_k}g(x)). \]
\end{proof}

\begin{rem}
  This result should be well-known to experts, yet the closest reference we found in the literature [Har06] only covers the scalar case (and does not immediately yield the multivariate case).
\end{rem}

Using a similar technique we show a version of this identity for controlled rough paths.
\begin{thm}\label{thm:ito}
  Let $\bW\in\Cr^\gamma$, $1\le N\le N_\gamma+1$, $\bX\in(\D_{\bW}^{N\gamma})^n$, and $\phi\in C^{N}(\R^n, \R^m)$ and set $X_t\coloneq\langle \mathbf{1},\bX_t\rangle$. We introduce the path $\mathbf{\Phi}(\bX)\colon[0,T]\to(H_{N-1})^m$ defined by $\langle \mathbf{1}^*,\mathbf{\Phi}(\bX)_t\rangle_j= \phi^j(X_t)$ and for any $j=1,\dotsc,m$, and any non-empty word $w$ by the identity
 \begin{equation} \label{eqn:taylorctrl}
   \langle e_w^*,\mathbf{\Phi}(\bX)_t\rangle_j\coloneq\sum_{k=1}^{|w|}\frac1{k!}\smashoperator[r]{\sum_{\substack{u_1,\dotsc,u_k\\w\in\Sh(u_1,\dotsc,u_k)}}}D^k\phi^j(X_t)(\langle e_{u_1}^*,\bX_t\rangle,\dotsc,\langle e_{u_k}^*,\bX_t\rangle).
 \end{equation}
Then $\mathbf{\Phi}(\bX)$ is also a controlled rough path belonging to $(\D_{\bW}^{N\gamma})^m$.
\end{thm}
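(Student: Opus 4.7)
My plan is to verify componentwise the defining controlled-path bound for $\mathbf{\Phi}(\bX)$: for each word $w$ with $|w|\le N-1$ and each $j$,
\[ \langle e_w^*,\mathbf{\Phi}(\bX)_t\rangle_j \underset{(N-|w|)\gamma}{=} \sum_{|v|\le N-|w|}\langle e_{wv}^*,\mathbf{\Phi}(\bX)_s\rangle_j\langle\bW_{st},e_v\rangle, \]
working componentwise in $\mathbb R^m$ (so I fix $j$ and write $\phi=\phi^j$). The two ingredients are a Taylor expansion of $\phi$ (and its higher derivatives) around $X_s$ and the geometric shuffle identity \eqref{equ:character}, which converts products $\prod_i\langle\bW_{st},e_{u_i}\rangle$ into linear combinations of single $\langle\bW_{st},e_v\rangle$'s. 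Hölder continuity of each component will follow for free from this bound once the $v=\varepsilon$ leading term is isolated.

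First I handle the base case $w=\varepsilon$. I apply Taylor's theorem \eqref{Taylor_thm} to $\phi$ at $X_s$ up to order $N-1$; since $X=\langle\mathbf{1}^*,\bX\rangle$ is $\gamma$-Hölder (being controlled), the Lagrange remainder is $O(|t-s|^{N\gamma})$. Then I substitute the controlled-path expansion of $X_t-X_s$ given by \eqref{eqn:ctrlcoord} at $w=\varepsilon$ and expand each $(X_t-X_s)^{\otimes k}$ multilinearly; cross-terms involving the $O(|t-s|^{N\gamma})$ remainder can be absorbed, leaving a sum over tuples $(u_1,\dotsc,u_k)$ of non-empty words with $\sum|u_i|\le N-1$ of the form $\frac1{k!}D^k\phi(X_s)(\langle e_{u_1}^*,\bX_s\rangle,\dotsc,\langle e_{u_k}^*,\bX_s\rangle)\prod_i\langle\bW_{st},e_{u_i}\rangle$. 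Invoking \eqref{equ:character} collapses each product into $\langle\bW_{st},e_{u_1}\shuffle\dotsm\shuffle e_{u_k}\rangle$; regrouping by the resulting test word $v$ and comparing with \eqref{eqn:taylorctrl}, I recover $\sum_{|v|\le N-1}\langle e_v^*,\mathbf{\Phi}(\bX)_s\rangle\langle\bW_{st},e_v\rangle$ as required (the $|v|=N$ contribution vanishes since $\mathbf{\Phi}(\bX)\in(H_{N-1})^m$).

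For $|w|\ge 1$ the same recipe applies directly to \eqref{eqn:taylorctrl}: I Taylor-expand $D^k\phi(X_t)$ in $x$ around $X_s$ to order $N-|w|-1$, and expand each $\langle e_{u_i}^*,\bX_t\rangle$ around $\bX_s$ via its controlled-path estimate of order $(N-|u_i|)\gamma$. Multiplying out, applying the shuffle identity to collapse products of $\langle\bW_{st},\cdot\rangle$, and collecting terms by the resulting test word $v$, the coefficient of each $\langle\bW_{st},e_v\rangle$ is to be identified with $\langle e_{wv}^*,\mathbf{\Phi}(\bX)_s\rangle$. Error accounting is routine: each remainder of order $|t-s|^{(N-|u_i|)\gamma}$ comes multiplied by companion factors of order $|t-s|^{(|w|-|u_i|)\gamma}$, yielding the desired $O(|t-s|^{(N-|w|)\gamma})$ aggregate error. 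The principal obstacle — and the real content of the statement — is the combinatorial matching in this last step: the ordered-partition structure encoded in \eqref{eqn:taylorctrl} must align exactly with the shuffle explosion produced by Taylor expansion followed by \eqref{equ:character}; this alignment relies critically on $\bW$ being geometric, and \Cref{lem:faadibruno} provides the template for the identification.
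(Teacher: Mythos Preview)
Your proposal is correct and follows essentially the same route as the paper's own proof: Taylor expand $\phi$ (resp.\ $D^k\phi$) around $X_s$, substitute the controlled-path expansions of $X_t-X_s$ and of each $\langle e_{u_i}^*,\bX_t\rangle$, invoke the character property \eqref{equ:character} to collapse products into shuffles, and then perform the combinatorial identification. The one place where the paper is slightly more explicit than your sketch is the mechanism for that final identification: rather than appealing to \Cref{lem:faadibruno} as a template, the paper uses the symmetry of $D^{k+m}\phi(X_s)$ to replace it by its average over all decompositions $I_k\sqcup J_m=\{1,\dotsc,k+m\}$, which directly produces the shuffle-sum structure of $\langle e_{wv}^*,\mathbf{\Phi}(\bX)_s\rangle$. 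Also, your error-accounting sentence is slightly off (the ``companion factors'' are bounded, not of order $|t-s|^{(|w|-|u_i|)\gamma}$), but since $|u_i|\le|w|$ the single remainder $O(|t-s|^{(N-|u_i|)\gamma})$ is already $O(|t-s|^{(N-|w|)\gamma})$, so the conclusion stands.
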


\begin{rem}
  A similar statement in the setting of branched rough paths \cite[Lemma 8.4]{Trees} is known and somewhat easier due to the absence of shuffle relations. 
\end{rem}
Before going into the proof, we introduce some more notation.
If $\bX$ is a controlled path, $L\in\mathcal L( (\R^n)^{\otimes k},\R^m)$, $t\ge 0$ and $u_1,\dotsc,u_k$ are words, we let
\begin{equation*}
  L(t;u_1,\dotsc,u_k)\coloneq L(\langle e_{u_1}^*,\bX_t\rangle,\dotsc,\langle e_{u_k}^*,\bX_t\rangle)\\
\end{equation*}
\begin{proof}
It is sufficient to prove the result when $m=1$. We first prove the result for the case of $\langle\mathbf{1}^*,\mathbf{\Phi}(\bX)_t\rangle=\phi(X_t)$. By Taylor expanding $\phi$ up to order $N$ around $X_s$ we get that
\begin{equation*}
  \phi(X_t)\underset{N\gamma}{=}\sum_{k=0}^{N-1}\frac{1}{k!}D^k\phi(X_s)\left( (X_t-X_s)^{\otimes k} \right)
\end{equation*}
Since $\bX\in\left( \D_{\bW}^{N\gamma} \right)^n$, according to \Cref{rem:coordinates}, we have
\begin{equation}\label{estimate_X}
\langle\1^*,\bX_t-\bX_s\rangle\underset{N\gamma}{=}\langle\bW_{st}-\1^*,\bX_s\rangle=\sum_{0<|u|< N}\langle e_u^*,\bX_s\rangle\langle\bW_{st},e_u\rangle.
\end{equation}
Plugging this estimate into the above equation and using the character property of $\bW_{st}$ in \eqref{equ:character} we obtain
\begin{align*}
  \phi(X_t)&\underset{N\gamma}{=}\sum_{k=0}^{N-1}\frac{1}{k!}\sum_{u_1,\dotsc,u_k}D^k\phi(X_s)(s;u_1,\dotsc,u_k)\langle\bW_{st},e_{u_1}\shuffle\dotsm\shuffle e_{u_k}\rangle\\
  &= \sum_{k=0}^{N-1}\frac{1}{k!}\sum_{u_1,\dotsc,u_k}\sum_{|w|\le N}D^k\phi(X_s)(s;u_1,\dotsc,u_k)\langle e_w^*,e_{u_1}\shuffle\dotsm\shuffle e_{u_k}\rangle\langle\bW_{st},e_w\rangle
\end{align*}
so the desired estimate follows.

Now we show the bound \eqref{eqn:ctrlbound} for all words $w\neq \mathbf{1}$.
By fixing an integer $1\leq k\leq \vert w \vert$ and words $u_1,\dotsc,u_k$  such that $w\in\Sh(u_1,\dotsc,u_k)$ we consider the term
\begin{equation}\label{equ:higher_terms}
  D^k\phi(X_t)(t;u_1,\dotsc,u_k).
\end{equation}
Again, since $\bX$ is controlled by $\bW$, plugging the estimate in \Cref{rem:coordinates} into \eqref{equ:higher_terms} and using the multilinearity :of the derivative we obtain
\begin{equation}\label{second_estimate}
    D^k\phi(X_t)(t;u_1,\dotsc,u_k)\underset{(N -\vert w\vert)\gamma}{=}\sum_{v_1,\dotsc,v_k}D^k\phi(X_t)(s;u_1v_1,\dotsc,u_kv_k)\langle\bW_{st},e_{v_1}\shuffle\dotsm\shuffle e_{v_k}\rangle.
\end{equation}
Performing a Taylor expansion of $D^k\phi$ up to order $N-|w|$ between $X_t$ and $X_s$, we obtain 
\begin{equation}\label{third_estimate}
  D^k\phi(X_t)(s;u_1v_1,\dotsc,u_kv_k)\underset{(N -\vert w\vert)\gamma}{=}\sum_{m=0}^{N-\vert w\vert-1}\frac{1}{m!}D^{k+m}\phi(X_s)\left((X_t-X_s)^{\otimes m} , \langle e_{u_1v_1}^*,\bX_s\rangle,\dotsc,\langle e_{u_kv_k}^*,\bX_s\rangle\right).
\end{equation}
Combining the estimates \eqref{second_estimate} and \eqref{third_estimate} with \eqref{estimate_X} into the definition of $\langle e_w^*,\mathbf{\Phi}(\bX)_t\rangle$, we obtain the identity
\begin{equation}\label{hard_combinatorics}
\begin{split}\langle e_w^*,\mathbf{\Phi}(\bX)_t\rangle&\underset{(N -\vert w\vert)\gamma}{=}
  \sum_{k=1}^{|w|}\sum_{m=0}^{N-1- \vert w\vert}\sum_{\substack{u_1,\dotsc,u_k\\w\in\Sh(u_1,\dotsc,u_k)}}\sum_{\substack{v_1,\dotsc,v_k\\z_1,\dotsc,z_m}}\frac{1}{k!m!}D^{k+m}\phi(X_s)(u_1v_1,\dotsc,u_kv_k,z_1,\dotsc,z_m)\\&\hspace{10em}\times\langle\bW_{st},e_{v_1}\shuffle\dotsm\shuffle e_{v_k}\shuffle e_{z_1}\shuffle\dotsm\shuffle e_{z_m} \rangle.
\end{split}
\end{equation}
Since the derivative $D^{k+m}\phi(X_s)$ is symmetric we can replace it with
\[\frac{k!m!}{(k+m)!}\sum_{ I_k\sqcup J_m= \{1,\dotsc,m+k\}} D^{k+m}\phi(X_s)(u_{i_1}v_{i_1},\dotsc,z_{j_1},\dotsc,u_{i_k}v_{i_k},\dotsc).\]
Replacing this expression in the right-hand side of \eqref{hard_combinatorics}, it is now an  easy but tedious exercise to  verify the resulting expression is equal to the sum
\[ \sum_{0\leq  \vert u \vert< N-\vert w\vert }\sum_{l=1}^{|w|+ \vert u\vert}\sum_{\substack{v_1,\dotsc,v_l \\ wu\in\operatorname{Sh}(v_1,\dotsc,v_l)}}\frac{1}{l!}D^l\phi(X_s)(s;v_1,\dots,v_l)\langle\bW_{st},e_{c}\rangle.\]
Thereby proving the result.
\end{proof}
\begin{rem}
  A similar proof gives quantitative bounds on the application $\bX\to \mathbf{\Phi}(\bX)$. Indeed for any $\phi\in \CC^{N}_b(\R^n, \R^m)$ it is possible to prove that this application is locally Lipschitz on $\D^{N\gamma}_{\bW}$.
\end{rem}

\section{Rough Differential Equations}
\label{sse:rdes}
Now we come to the definition of solution of the RDE
\begin{equation}\label{equ:RDE}
\begin{cases}
  \mathrm dX_t=\displaystyle\sum_{i=1}^df_i(X_t)\,\mathrm d\bW^i_t,\\
X_0=x.
\end{cases}
\end{equation}
We assume that the vector fields $f_1,\dotsc,f_d$ are of class at least $\CC^{N_\gamma}$, so that by \Cref{thm:ito} the composition $f_i(X_t)$ can be lifted to a controlled path $\bF_i\colon \left( \D^{N_{\gamma}\gamma}_{\bW} \right)^n\to \left( \D^{N_{\gamma}\gamma}_{\bW} \right)^n$.
\begin{defn} \label{defn:rdesol}
  A path $X\colon[0,T]\to \R^n$ is a solution of \eqref{equ:RDE} if there exists a controlled path $\bX\in\left( \D^{N_{\gamma}\gamma}_{\bW} \right)^n$ satisfying $\langle \mathbf{1}^*, \bX_t\rangle= X_t$ such that
\begin{equation}\label{equ:forw_fixed_point}
\bX_t-\bX_s=\sum_{i=1}^d\int_s^t\bF_i(\bX)_u\,\mathrm d\bW^i_u.
\end{equation}
for all $s,t\in[0,T]$.
\end{defn}

\begin{rem}
  We stress that (3.2) is an equation in $\D^{N_\gamma\gamma}_{\bW}$, which in fact implies that $\langle e_w^*,\bX_t\rangle=F_w(X_t)$ for all words $w$ with $|w|\le N_\gamma-1$.
\end{rem}

\begin{rem}
  If $\bX\in\D^{N_\gamma\gamma}_{\bW}$ satisfies \cref{equ:forw_fixed_point}, it can also be regarded as an element of $\D^{(N_\gamma+1)\gamma}_{\bW}$, by \cref{eqn:rintdef}.
  Therefore we freely treat solutions to RDEs as elements of either of these spaces.
\end{rem}

By solving a fixed point equation on $\left( \D_{\bW}^{N_\gamma\gamma} \right)^n$ (see e.g. \cite{FH2014}) of the form
\[\bX_t= \bX_0+ \sum_{i=1}^d\int_0^t\bF_i(\bX)_u\,\mathrm d\bW^i_u  \]
with (see below for the definition of the functions $F_w\colon\R^n\to\R^n$)
\[ \bX_0=\sum_{|w|\le N_\gamma-1}F_w(x)e_w\in\left( H_{N_\gamma-1} \right)^n. \]
We can prove that there exists a unique global solution of \eqref{equ:forw_fixed_point} if the vector fields are of class $\CC^{N_\gamma+1}_b$. We recall this interesting expansion of the solution.
\begin{prop}[Davie's expansion]\label{prop:Davie}
A path $X\colon[0,T]\to \R^n$ is the unique rough path solution to \cref{eqn:RDE} in the sense of \Cref{defn:rdesol} if and only if
\begin{equation}\label{eqn:rdesol_local}
X_t\underset{(N_{\gamma}+1)\gamma}{=}\sum_{0\le|w|\le N_{\gamma}}F_w(X_s)\langle\bW_{st}, e_w\rangle
\end{equation}
and the coefficients of its lift $\bX\in(\D_{\bW}^{N_\gamma+1})^n$ are given by $\langle e_w^*,\bX_t\rangle=F_w(X_t)$ where the functions $F_w\colon\R^n\to\R^n$ are recursively defined by by $F_{\varepsilon}\coloneq{\id}$ and
\begin{equation}\label{Gamma_identity}
F_{iw}(x)\coloneqq DF_w(x)f_i(x).
\end{equation}

\end{prop}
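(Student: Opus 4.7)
The plan is to prove both implications separately, with the central step in either direction being the coefficient identification $\langle e_w^*, \bX_t\rangle = F_w(X_t)$ by induction on $|w|$, combining the rough integral identities of \cref{eqn:rintdef,eqn:roughint_coef,equ:rough_int_est} with the composition result of \cref{thm:ito}.

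For the forward direction, assume $\bX \in (\D^{N_\gamma \gamma}_\bW)^n$ satisfies \cref{equ:forw_fixed_point}; by the remark following \cref{defn:rdesol}, $\bX$ is in fact an element of $(\D^{(N_\gamma+1)\gamma}_\bW)^n$. Writing a word of length $k+1$ as $\hat w i$, the definition of the rough integral in \cref{eqn:roughint_coef} combined with the fixed point equation gives
\[ \langle e_{\hat w i}^*, \bX_t\rangle = \langle e_{\hat w}^*, \bF_i(\bX)_t\rangle. \]
Applying \cref{thm:ito} to expand the right-hand side in terms of $\{\langle e_v^*, \bX_t\rangle : |v| \le |\hat w|\}$ and invoking the inductive hypothesis, we obtain a shuffle-type expansion in $F_{v_j}(X_t)$. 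Verifying that this expansion agrees with $F_{\hat w i}(X_t)$ as defined by \cref{Gamma_identity} is exactly the algebraic identity discussed below. Once this identification is secured, the Davie estimate \cref{eqn:rdesol_local} is just the controlled-path bound \cref{eqn:ctrlcoord} applied at $w = \varepsilon$, after substituting $\langle e_v^*, \bX_s\rangle = F_v(X_s)$.

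For the backward direction, given a path $X$ satisfying \cref{eqn:rdesol_local}, define a candidate lift by $\langle e_w^*, \bX_t\rangle \coloneq F_w(X_t)$. I would first check that $\bX \in (\D^{(N_\gamma+1)\gamma}_\bW)^n$: at $w=\varepsilon$ the required bound is precisely \cref{eqn:rdesol_local}, while for nonempty $w$ a Taylor expansion of $F_w$ about $X_s$ (using \cref{eqn:rdesol_local} to expand the increments of $X$) combined with the character property \cref{equ:character} of $\bW$ yields the estimate \cref{eqn:ctrlcoord}. To verify \cref{equ:forw_fixed_point}, I would compute $\int_s^t \bF_i(\bX)\,\mathrm d\bW^i_u$ using \cref{equ:rough_int_est} and the coefficient identification $\langle e_{w}^*, \bF_i(\bX)_t\rangle = F_{wi}(X_t)$ coming from \cref{thm:ito}, then sum over $i$ to recover the increments of $\bX$ in $(H_{N_\gamma})^n$. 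Uniqueness follows at once from the sewing/Davie bound, which pins down $X$ on $[0,T]$, and hence the whole lift through the formulas $\langle e_w^*,\bX_t\rangle = F_w(X_t)$.

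The main obstacle is the algebraic identity reconciling the right-attaching expansion arising from \cref{thm:ito} with the left-attaching recursion \cref{Gamma_identity}, namely
\[ F_{wi}(x) = \sum_{k=1}^{|w|} \frac{1}{k!} \sum_{\substack{v_1, \ldots, v_k \\ w \in \Sh(v_1, \ldots, v_k)}} D^k f_i(x)\bigl(F_{v_1}(x), \ldots, F_{v_k}(x)\bigr). \]
I would prove this by induction on $|w|$, unfolding $F_{wi}$ via repeated use of $F_{jv} = DF_v\, f_j$ and reorganising the resulting terms through Faà di Bruno's formula (\cref{lem:faadibruno}) together with the symmetry of the higher derivatives, which absorbs the combinatorial factors produced by the shuffle sums. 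This identity is the coordinate shadow of the shuffle relations for iterated integrals and is what ultimately enforces consistency between the weakly geometric nature of $\bW$ and the recursive definition of $F_w$.
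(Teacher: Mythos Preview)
Your proposal is correct and follows essentially the same route as the paper. The paper also reduces the forward direction to the coefficient identity $\langle e_{wi}^*,\bX_t\rangle=\langle e_w^*,\bF_i(\bX)_t\rangle$ via \cref{eqn:roughint_coef}, applies \cref{thm:ito}, and isolates the same algebraic identity between the right-attaching shuffle expansion and the left-attaching recursion $F_{iw}=DF_w\,f_i$ (which it dismisses as ``an easy but tedious verification'' via Leibniz, where you invoke Fa\`a di Bruno); the backward direction likewise proceeds by Taylor-expanding $F_w(X_t)-F_w(X_s)$ and then matching coefficients through \cref{eqn:roughint_coef}.
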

\begin{rem}
By \cref{eqn:ctrlcoord} this results actually implies the chain of estimates, for all words $|w|\le N_{\gamma}$,
\[ F_w(X_t)\underset{(N_{\gamma}+1-|w|)\gamma}{=}\sum_{0\leq |u|\le N-|w|}F_{wu}(X_s)\langle\bW_{st},e_u\rangle. \]
\end{rem}
\begin{proof}[Proof of \Cref{prop:Davie}]
  Suppose that $\bX$ is a rough solution to \cref{eqn:RDE} in the sense of \Cref{defn:rdesol}. We define the functions $F_w\colon\R^n\to\R^n$ recursively by $F_i(x)\coloneq f_i(x)$ and
\begin{equation}
 F_{wi}(x)\coloneq\sum_{k=1}^{|w|}\frac{1}{k!}\smashoperator[r]{\sum_{\substack{u_1,\dotsc,u_k\\w\in\Sh(u_1,\dotsc,u_k)}}}D^kf_i(x)(F_{u_1}(x),\dotsc,F_{u_k}(x))
  \label{eqn:vectorfields}
\end{equation}
 Now it is an easy but tedious verification to show that these functions satisfy $F_{iw}(x)=DF_w(x)f_i(x)$; this identity essentially amounts to a reiterated use of the Leibniz rule.
 The form of the coefficients of $\bX$ is shown by induction, it being clear for a single letter $i=1,\dotsc,d$.
 If $w$ is any word with $0\le|w|\le N-1$ and $i\in\{1,\dotsc,d\}$ by definition
  \begin{align*}
    \langle e_{wi}^*,\bX_t-\bX_0\rangle&= \left\langle e_{wi}^*,\sum_{j=1}^d\int_0^t\bF_j(\bX)_u\,\mathrm dW^j_u \right\rangle\\
    &= \langle e_w^*,\bF_i(\bX)_t\rangle
  \end{align*}
where, in the second identity we have used \cref{eqn:roughint_coef}.
By \Cref{thm:ito}, the last coefficient equals
\[ \sum_{k=1}^{|w|}\frac{1}{k!}\sum_{\substack{u_1,\dotsc,u_k\\w\in\Sh(u_1,\dotsc,u_k)}}D^kf_i(X_t)(t;u_1,\dotsc,u_k)=F_{wi}(X_t) \]
by the induction hypothesis.
Then we obtain \cref{eqn:rdesol_local} from \Cref{defn:cRP} and \Cref{rem:coordinates}.

  Conversely, suppose that $X$ admits the local expansion in \cref{eqn:rdesol_local} and that the path $\bX$ satisfies $\langle e_w^*,\bX_t\rangle=F_w(X_t)$ for all words $w$ with $|w|\le N$.
  First we show that $X$ is controlled by $\bW$ with coefficients given by $\bX$.
  For this we have to Taylor expand the difference $F_w(X_t)-F_w(X_s)$ and collect terms as in the proof of \Cref{thm:ito}.
  Then, by \cref{eqn:roughint_coef} it is not difficult to see that in fact
  \[ \langle e_{wi}^*,\bX_t\rangle=F_{wi}(X_t)=\left\langle e_{wi}^*,\sum_{j=1}^d\int_0^t\bF_j(\bX)_u\,\mathrm dW^j_u \right\rangle \]
  so that \Cref{defn:rdesol} is satisfied.
\end{proof}

\subsection{Differentiability of the flow}
It is a standard result in classical ODE theory that given a regular enough vector field $V$, the equation $\dot X=V(X)$ induces a smooth flow on $\R^d$.
Indeed, if we let $X^x_t$ denote the unique solution of this equation such that $X_0^x=x$, then the map $(t,x)\mapsto X_t^x$ is a flow, in the sense that $(t,X^x_s)\mapsto X^x_{t+s}$ and the mapping $x\mapsto X_t^x$ is a diffeomorphism for each fixed $t$.
More precesily, if $V$ is of class $\CC^k$, then the application $x\mapsto X_t^x$ is also of class $\CC^k$.

Now we show that a similar statement is true in the case of RDEs.
The statement is the following
\begin{thm}
  Let $f_1,\dotsc,f_d$ be a family of class $\CC^{N_\gamma+1+k}_b$ vector fields in $\R^d$ for some integer $k\ge 0$, and $\bW\in\Cr^\gamma$.
  Then
  \begin{enumerate}
    \item the RDE
      \[ \mathrm dX_t=\sum_{i=1}^df_i(X_t)\,\mathrm d\bW^i_t,\quad X_s=x \]
      has a unique solution $\bX^{s,x}\in\D^{(N_\gamma+1)\gamma}_{\bW}$,
    \item the induced flow $x\mapsto X^{s,x}_t$ is a class $\CC^{k+1}$ diffeomorphism for each fixed $s<t$, and
    \item the partial derivatives satisfy the system of RDEs
      \begin{equation}
        \mathrm d\partial^\alpha X_t^{s,x}=\sum_{i=1}^d\sum_{k=1}^{|\alpha|}\frac{1}{k!}\sum_{\alpha\in \operatorname{Sh}(\beta_1,\dotsc,\beta_k)}D^kf_i(X^{s,x}_t)(\partial^{\beta_1}X^{s,x}_t,\dotsc,\partial^{\beta_k}X^{s,x}_t)\,\mathrm d\bW^i_t
        \label{eqn:pdrde}
      \end{equation}
      with initial conditions $X^{s,x}_s=x$, $\partial^i X^{s,x}_s=e_i$ and $\partial^\alpha X^{s,x}_s=0$ for all words with $|\alpha|\ge 2$.
  \end{enumerate}
  \label{thm:smoothflow}
\end{thm}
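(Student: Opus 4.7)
My plan is as follows. Part~(1) is not new: existence and uniqueness of $\bX^{s,x}\in\D^{(N_\gamma+1)\gamma}_{\bW}$ follows from the standard Banach fixed-point argument on $(\D^{N_\gamma\gamma}_{\bW})^n$ sketched right before \Cref{prop:Davie}, using the continuity of rough integration (\Cref{rem:rintbound}) and the local Lipschitz property of the lift $\bX\mapsto\bF_i(\bX)$ provided by \Cref{thm:ito}, both available under the assumption $\CC^{N_\gamma+1+k}_b\subset\CC^{N_\gamma+1}_b$.

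For Part~(3), my plan is to construct, inductively on $\ell=|\alpha|$, an augmented controlled rough path carrying $X^{s,x}$ together with all partial derivatives $\partial^\alpha X^{s,x}$ of order $|\alpha|\le k+1$. At each level $\ell\le k+1$, Faà di Bruno (\Cref{lem:faadibruno}) applied formally under the rough integral produces precisely \eqref{eqn:pdrde}, which is \emph{linear} in the top-order derivative $\partial^\alpha X$ (only the summand $m=1$, $\beta_1=\alpha$ contributes $\partial^\alpha X$, with coefficient $Df_i(X)$), all remaining terms involving only $X$ and strictly lower-order derivatives. Assuming inductively that all derivatives up to order $\ell-1$ have already been lifted to elements of $(\D^{N_\gamma\gamma}_{\bW})^n$, the right-hand side of \eqref{eqn:pdrde} defines a controlled rough path via repeated use of \Cref{thm:ito} (each composition $D^mf_i\circ X^{s,x}$ requires $f_i\in\CC^{N_\gamma+m}_b$, the binding case being $m=\ell$), and the linearity in $\partial^\alpha X$ yields a unique $\partial^\alpha X\in(\D^{N_\gamma\gamma}_{\bW})^n$ by a standard linear-RDE fixed-point argument. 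Iterating up to $\ell=k+1$ uses precisely $f_i\in\CC^{N_\gamma+1+k}_b$ and produces a candidate $\mathbf Y^{s,x}=(X^{s,x},(\partial^\alpha X)_{|\alpha|\le k+1})$ solving \eqref{eqn:pdrde}.

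For Part~(2), I would identify these formal $\partial^\alpha X$'s with actual partial derivatives of the flow by induction on $k$, and close with a backward-flow argument for invertibility. For $k=0$ I introduce the finite-difference quotient $\Delta^\eps_i X^{s,x}_t\coloneq\eps^{-1}(X^{s,x+\eps e_i}_t-X^{s,x}_t)$; the pair $(X^{s,x+\eps e_i},\Delta^\eps_i X^{s,x})$ satisfies a coupled RDE whose vector fields converge, as $\eps\to 0$, in $\CC^{N_\gamma+1}_b$ on compacts to those of the $|\alpha|=1$ tangent RDE via the mean-value identity $\eps^{-1}(f_i(y+\eps v)-f_i(y))=\int_0^1 Df_i(y+r\eps v)v\,\mathrm dr$. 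The standard rough-path stability estimate (a consequence of the contractive fixed-point construction together with the remark after \Cref{thm:ito}) then forces $\Delta^\eps_i X^{s,x}\to\partial^i X^{s,x}$ in $\D^{N_\gamma\gamma}_{\bW}$ uniformly for $x$ in compact sets, yielding $\CC^1$-regularity of the flow and the identification of the candidate level-one component with the true $\partial^iX^{s,x}$. Higher orders follow by iterating the same finite-difference argument on the augmented system once it is known to be of class $\CC^j$. The inverse $(X^{s,\cdot}_t)^{-1}=X^{t,\cdot}_s$ is constructed by driving the RDE with the time-reversed rough path; its smoothness follows from the same arguments, and the fact that it is the inverse from uniqueness combined with Chen's relation \eqref{equ:chen}. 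The main obstacle is the finite-difference convergence step: it requires $\eps$-uniform bounds on $\|\Delta^\eps_i X^{s,\cdot}\|_{\D^{N_\gamma\gamma}_{\bW}}$ and a quantitative rough-path stability estimate, simultaneous in vector fields and initial data, strong enough to pass to the limit in \emph{every} coefficient of the controlled-path lift; this is where the full strength of $\CC^{N_\gamma+1+k}_b$ is used, the remainder being essentially combinatorial bookkeeping enabled by \Cref{lem:faadibruno}.
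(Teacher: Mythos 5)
Your outline is essentially correct, but for point~(3) it takes a genuinely different route from the paper. The paper does not construct the derivative system intrinsically at the rough level: it reduces point~(3) to the case of a \emph{smooth} driver by approximating $\bW\in\Cr^\gamma$ uniformly with uniform $\gamma$-H\"older bounds (hence in $\Cr^{\gamma-\eta}$), invokes well-posedness of the resulting triangular system to justify passage to the limit, and then obtains \eqref{eqn:pdrde} for smooth $W$ as an immediate consequence of the classical Fa\`a di Bruno formula (\Cref{lem:faadibruno}) applied to the integral representation of the flow; points~(1) and~(2) are simply cited from Friz--Victoir, Ch.~11. Your approach instead builds the augmented controlled path $(X,(\partial^\alpha X)_{|\alpha|\le k+1})$ by induction on $|\alpha|$ via \Cref{thm:ito} and a linear fixed point, and then identifies the formal components with genuine derivatives by difference quotients and rough-path stability. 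What your route buys is independence from the geometric approximation argument (so it would survive in a branched/non-geometric setting) and an explicit accounting of where each derivative of $f$ is consumed; what it costs is precisely the stability step you flag, which is the content of the cited standard result and is not free. Two small cautions there: (i) at the lowest level $k=0$ the difference-quotient vector fields $\int_0^1 Df_i(y+r\eps v)v\,\mathrm dr$ converge to $Df_i(y)v$ in $\CC^{N_\gamma}$ with uniform $\CC^{N_\gamma+1}$ bounds, not in $\CC^{N_\gamma+1}$ as written (one interpolates, or works in $\Cr^{\gamma-\eta}$), and (ii) the augmented vector fields are unbounded (linear in the derivative variables), so "global existence" must be argued from the triangular linear structure rather than from a bounded-vector-field theorem --- the paper makes exactly this caveat. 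Note also that your augmented-system construction is essentially the state-space extension $\mathfrak S_k$ that the paper introduces \emph{after} \Cref{thm:smoothflow} in order to derive the Davie expansion of \Cref{cor:partialdavie}, so your proposal in effect merges the proof of the theorem with that of the corollary.
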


\begin{proof}
Point 1. and 2. are standard results in rough paths as found e.g. in Chapter 11 in \cite{FrizVictoir}.
For the algebraic identity in 3., it suffices to show the results in the case $W$ is smooth.  Indeed, by standard arguments $\bW\in\Cr^\gamma$ can be approximated uniformly with uniform $\gamma$-H\"older rough path bound, and hence in $\Cr^{\gamma-\eta}$ for any $\eta>0$, while on the other hand the particular structure (cf. Chapter 11 in \cite{FrizVictoir}) of the system of (rough) differential equations guarantees uniqueness and global existence so that the limiting argument is justified.  
  
  It remains to show point 3. for $\bW$ smooth. We note that the integral representation of the solution
  \[ X^{s,x}_t=x+\sum_{i=1}^d\int_s^tf_i(X^{s,x}_u)\dot W^i_u\,\mathrm du \]
  holds.
  By \Cref{lem:faadibruno}, for any $\alpha=(\alpha_1,\dotsc,\alpha_m)$ and $s<u<t$, we have
  \begin{equation}
    \partial^\alpha X_{ut}^{s,x}=\int_u^t\sum_{i=1}^d\sum_{k=1}^{m}\frac{1}{k!}\sum_{\beta_1,\dotsc,\beta_k}D^kf_i(X^{s,x}_r)(\partial^{\beta_1}X^{s,x}_r,\dotsc,\partial^{\beta_k}X^{s,x}_r)\dot W^i_r\,\mathrm dr
    \label{eqn:pdrde.1}
  \end{equation}
  which is the smooth version of \cref{eqn:pdrde}.
\end{proof}
We aim now to obtain a Davie-type expansion of the partial derivatives $\partial^\alpha X^{s,x}$ by making use of point 3. above.
  We observe that the above system of equations has the form
  \begin{align*}
    \mathrm dX^{s,x}_t&= \sum_{i=1}^df_i(X^{s,x}_t)\,\mathrm d\bW^i_t\\
    \mathrm dDX^{s,x}_t&= \sum_{i=1}^dDf_i(X^{s,x}_t)DX^{s,x}_t\,\mathrm d\bW^i_t\\
    \mathrm dD^2X^{s,x}_t&= \sum_{i=1}^dDf_i(X^{s,x}_t)D^2X^{s,x}_t\,\mathrm d\bW^i_t+(\cdots)\\
    &\vdots
  \end{align*}
  with initial conditions $X^{s,x}_s=x$, $DX^{s,x}_s=I$, $D^2X^{s,x}_s=D^3X^{s,x}_s=\dotsb=0$, where the inhomogeneity $(\cdots)$ is not important to spell out.
  
  The expansion is clear only for the first equation; it is just \cref{eqn:rdesol_local}.
  We would like to use \Cref{prop:Davie} to obtain an expansion of the second equation but the problem is that the vector field driving the equation depends on time, so the result does not directly apply.
  For the third and subsequent equations the problem is not only that but also they are non-homogeneous.
  
  To solve this problem we extend our state space $\R^n$ to (the still finite-dimensional space)
  \[ \mathfrak S_k\coloneq \R^n\oplus\mathcal L(\R^n,\R^n)\oplus\dotsb\oplus\mathcal L\left( (\R^n)^{\otimes(k-1)},\R^n \right) \]
  and define the vector fields (we give a more precise definition below in \cref{eqn:fidef}) $\mathfrak f_i\colon\mathfrak S_k\to\mathfrak S_k$ by
  \[ \mathfrak f_i(\mathfrak x)\coloneq (f_i(x),Df_i(x)(y_1),D^2f_i(x)(y_1,y_1)+Df_i(x)(y_2),\dotsc) \]
  where $\mathfrak x=(x,y_1,y_2,\dotsc,y_{k-1})\in\mathfrak S_k$.
  The previous proposition shows that if  
  \[\mathfrak X^{s,x}_t\coloneq (X^{s,x}_t,DX^{s,x}_t,\dotsc,D^{k-1}X^{s,x}_t) \]
  then
  \[ \mathrm d\mathfrak X^{s,x}_t=\sum_{i=1}^d\mathfrak f_i(\mathfrak X^{s,x}_t)\,\mathrm d\bW^i_t,\quad\mathfrak X^{s,x}_s\coloneq\mathfrak x=(x,I,0,\dotsc,0). \]
  This transformation turns the system of non-autonomous non-homogeneous RDEs into a single autonomous homogeneous RDE in $\mathfrak S_k$.

\begin{cor}
  \label{cor:partialdavie}
  For any word $\alpha$, the partial derivatives of the solution flow $X^{s,x}$ have the following Davie expansion: for any $p=1,\dotsc,k-1$, 
  \[ D^pX^{s,x}_{t}\underset{(N_\gamma+1)\gamma}{=}\sum_{0\le|v|\le N_\gamma}D^pF_w(x)\langle\bW_{st},e_w\rangle. \]
  In particular, for a word $\alpha\in\{1,\dotsc,n\}^p$ we have that
  \[ \partial^\alpha X^{s,x}_{t}\underset{(N_\gamma+1)\gamma}{=}\sum_{0\le|v|\le N_\gamma}\partial^\alpha F_w(x)\langle \bW_{st},e_w\rangle. \]
\end{cor}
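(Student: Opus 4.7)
The plan is to apply Davie's expansion (Proposition~\ref{prop:Davie}) to the single autonomous homogeneous RDE on $\mathfrak{S}_k$ constructed in the paragraph immediately preceding the statement. Under the hypothesis $f_i\in\CC^{N_\gamma+1+k}_b$ of Theorem~\ref{thm:smoothflow}, the extended vector fields $\mathfrak{f}_i$ on $\mathfrak{S}_k$ are (at least) of class $\CC^{N_\gamma+1}_b$, because their last component involves only $(k-1)$-st order derivatives of $f_i$. Proposition~\ref{prop:Davie} therefore applies and yields
\[ \mathfrak{X}^{s,x}_t\underset{(N_\gamma+1)\gamma}{=}\sum_{0\le|w|\le N_\gamma}\mathfrak{F}_w(\mathfrak{x}_0)\,\langle\bW_{st},e_w\rangle, \]
where $\mathfrak{x}_0=(x,I,0,\dots,0)$ is the initial condition prescribed in the setup and $\mathfrak{F}_w$ denotes the iterated vector field of the extended system, defined recursively by $\mathfrak{F}_\varepsilon=\id_{\mathfrak{S}_k}$ and $\mathfrak{F}_{iw}=D\mathfrak{F}_w\cdot\mathfrak{f}_i$.

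The crux of the argument is then to identify
\[ \mathfrak{F}_w(\mathfrak{x}_0)=\bigl(F_w(x),\,DF_w(x),\,D^2F_w(x),\,\dots,\,D^{k-1}F_w(x)\bigr) \]
by induction on $|w|$. The base case $w=\varepsilon$ is immediate, since $F_\varepsilon=\id$ has derivative tuple $(I,0,\dots,0)=\mathfrak{x}_0$. For the inductive step I would compute $D\mathfrak{F}_w(\mathfrak{x}_0)\mathfrak{f}_i(\mathfrak{x}_0)$ component by component, using the defining recursion $F_{iw}=DF_w\cdot f_i$ from \eqref{Gamma_identity} together with the Leibniz rule: the structure of $\mathfrak{f}_i$ at the distinguished point $\mathfrak{x}_0$ (where all tangent coordinates beyond the first vanish) is tailored so that the $p$-th component of $D\mathfrak{F}_w\cdot\mathfrak{f}_i$ reassembles exactly into $D^pF_{iw}(x)$. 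Once this identification is established, projecting the Davie expansion of $\mathfrak{X}^{s,x}_t$ onto its $p$-th coordinate gives the claimed estimate for $D^pX^{s,x}_t$; specialising the resulting multilinear maps on the canonical basis vectors $e_{\alpha_1},\dots,e_{\alpha_p}$ finally yields the coordinate statement for $\partial^\alpha X^{s,x}_t$.

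The main obstacle I anticipate is the bookkeeping in this inductive step. Differentiating $\mathfrak{F}_w\circ\mathfrak{f}_i$ through $p$ applications of the Leibniz/chain rule produces a sum indexed by set partitions of $\{1,\dots,p\}$, mirroring the Faà di Bruno structure of Lemma~\ref{lem:faadibruno}; one must track carefully how the vanishing of the higher tangent coordinates at $\mathfrak{x}_0$ eliminates the bulk of the cross terms, and verify that the surviving ones recombine exactly into the Leibniz expansion of $D^p(DF_w\cdot f_i)=D^pF_{iw}$, with no stray factors. This step is conceptually routine but notationally heavy, and is the place where the specific choice of initial condition $\mathfrak{x}_0=(x,I,0,\dots,0)$ plays an essential role.
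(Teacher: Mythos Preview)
Your proposal is correct and follows essentially the same strategy as the paper: apply \Cref{prop:Davie} to the extended RDE on $\mathfrak{S}_k$, then prove $\mathfrak{F}_w(\mathfrak{x}_0)_p=D^pF_w(x)$ by induction on $|w|$, with the inductive step handled via a Faà di Bruno\slash Leibniz-type expansion of $D^p(DF_w\cdot f_i)$ and the simplification afforded by the vanishing higher tangent coordinates at $\mathfrak{x}_0$. The paper carries out precisely this computation, writing $F_{iw}=\varphi_1\circ\varphi_2$ with $\varphi_1(x,h)=DF_w(x)h$, $\varphi_2(x)=(x,f_i(x))$ and splitting the resulting partition sum into three cases ($q=p$, $q=1$, $1<q<p$) to match each surviving term with some $\partial_r\mathfrak{F}_w(\mathfrak{x}_0)_p\,\mathfrak{f}_i(\mathfrak{x}_0)_r$.
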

\begin{proof}

  The hypotheses on the vector fields $f_1,\dotsc,f_d$ imply that $\mathfrak f_1,\dotsc,\mathfrak f_d$ are of class $\CC^{N_\gamma+1}_b$ on $\mathfrak S_k$, so this equation has a unique solution.
  Applying \Cref{prop:Davie} in this extended space we obtain, for $s<t$, the expansion
  \[ \mathfrak X_t^{s,x}\underset{(N+1)\gamma}{=}\sum_{0\le|w|<N}\mathfrak F_w(\mathfrak x)\langle\bW_{st},e_w\rangle. \]

 In order to deduce the result, we need to show that $\mathfrak F_w(\mathfrak x)_p=D^pF_w(x)$ for all words $w$ and $p=0,1,\dotsc,k-1$.
 We do this by induction on the length of $w$.
 If $w=i$ is a single letter, the $p$-th component, $p=0,1,\dotsc,k-1$, of the vector field $\mathfrak f_i$ is given by $\mathfrak f_i(x)_0=f_i(x)$ and
  \begin{equation}
\label{eqn:fidef}
    \mathfrak f_i(\mathfrak x)_p=\sum_{j=1}^p\sum_{(r)_j}\frac{p!}{r_1!\dotsm r_j!(1!)^{r_1}\dotsm (j!)^{r_k}}D^{p-j+1}f_i(x)(y_1^{r_1},\dotsc,y_{j}^{r_j})
  \end{equation}
  where the inner sum is over the set of indices $(r_1,\dotsc,r_j)$ such that $r_1+\dotsb+r_j=p-j+1$ and $r_1+2r_2+\dotsb+jr_j=p$.
  For our particular initial condition, $y_j=0$ for $j=2,3,\dotsc,k-1$ the formula simplifies to
  \[ \mathfrak f_i(\mathfrak x)_p=D^pf_i(x)\in\mathcal L\left( (\R^n)^{\otimes p},\R^n \right) \]
  since the only term left in \eqref{eqn:fidef} is the one with $j=1$, $r_1=p$.

  We continue by induction on the length of the word.
  We compute the $p$-th derivative of $x\mapsto F_{iw}(x)=DF_w(x)f_i(x)$ by recognizing that $F_{iw}=\varphi_1\circ\varphi_2$ with $\varphi_1(x,h)=DF_w(x)h$ and $\varphi_2(x)=(x,f_i(x))$.
  A quick check gives that the higher order derivatives of $\varphi_1$ and $\varphi_2$ are given by
  \begin{align*}
    D^m\varphi_1(x,h)( (u_1,v_1),\dotsc,(u_m,v_m) )&= D^{m+1}F_w(x)(u_1,\dotsc,u_m,h)+\sum_{j=1}^mD^mF_w(x)(u_1,\dotsc,\hat u_j,\dotsc,u_m)\\
    D^m\varphi_2(x)(h_1,\dotsc,h_m)&= (h_1\delta_{m=1},D^mf_i(x)(h_1,\dotsc,h_m))
  \end{align*}
  where $\hat u_j=v_j$.
  Thus, using \Cref{lem:faadibruno} we get that
  \[ D^pF_{iw}(x)(h_1,\dotsc,h_p)=\sum_{\pi\in\mathcal P(p)}D^{\#\pi}\varphi_1(\varphi_2(x))(D^{|B_1|}\varphi_2(x)h_{B_1},\dotsc,D^{|B_q|}\varphi_2(x)h_{B_q}). \]

  Now we have three cases, depending on the number of blocks of the partition in the above summation:
  \begin{enumerate}
    \item $q=p$: there is a single partition with $p$ blocks, and each block is a singleton. In this case the term equals
      \[ D^{p+1}F_w(x)(h_1,\dotsc,h_p,f_i(x))+\sum_{j=1}^pD^pF_w(x)(h_1,\dotsc,Df_i(x)h_j,\dotsc,h_p). \]
    \item $q=1$: there is a single partition with one block, namely $\pi=\{1,\dotsc,p\}$. In this case the term equals
      \[ DF_w(x)[D^pf_j(x)(h_1,\dotsc,h_p)]. \]
    \item $1<q<p$: there is at least one block of size greater than one, which means that the first term in the expression for $D^m\varphi_1$ vanishes since at least one of $u_1,\dotsc,u_m$ vanishes. For the rest of the terms, the exact result depends on whether there is a block with exactly one block or not: if all blocks have more than one block then the whole expression vanishes; otherwise, we obtain one term for each of the blocks having size exactly one, and it is of the form
      \[ D^{\#\pi}F_w(x)(h_{B},D^{\#p-|B|}f_i(x)h_{\pi\setminus B}). \]
  \end{enumerate}
  In either case, using the induction hypothesis it is possible to show that each of the terms appearing are of the form $\partial_r\mathfrak F_w(\mathfrak x)_p\mathfrak f_i(\mathfrak x)_r$, which then means that $D^pF_{iw}(x)=[D\mathfrak F_w(\mathfrak x)\mathfrak f_i(\mathfrak x)]_p$ as desired.
  For example, the term
  \[ D^{p+1}F_w(h_1,\dotsc,h_p,f_i(x)) \]
  corresponds to
  \[  [\partial_0\mathfrak F_w(\mathfrak x)_p\mathfrak f_i(\mathfrak x)_0](h_1,\dotsc,h_p) \]
  and so on.

\end{proof}
  In particular for the first derivative, the first few terms of the expansion read
  \begin{align*}
    DX^{s,x}_t&= I+\sum_{i=1}^dDf_i(x)\langle\bW_{st},e_i\rangle+\sum_{i,j=1}^d\Bigl(Df_j(x)Df_i(x)+D^2f_j(x)(f_i(x),{\id})\Bigr)\langle\bW_{st},e_{ij}\rangle+\dotsb
  \end{align*}

\subsection{Itô's formula for RDE}\label{sse:ito_formula}
The last ingredient to add in the study of the rough transport equation is to write down a change of variable formula for a solution of \cref{eqn:RDE} for some sufficiently smooth vector field $f=(f_1,\dotsc,f_d)$. By analogy with terminology of stochastic calculus we call it an ``It\^o formula''.
For any $i=1,\dotsc,n$ we denote by  $\Gamma_i$ the differential operator $f_i(x)\cdot D_x$ and for any non-empty word $w=i_1\cdots i_m$  we use the shorthand notation
\[\Gamma_{w}:=\Gamma_{i_1}\circ \cdots \circ \Gamma_{i_m}\,.\]
Moreover we adopt the convention $\Gamma_{\varepsilon}={\operatorname{id}}$.
\begin{lem}\label{lem:comb2}
  Let $f_1,\dotsc,f_d\in\CC^{N_\gamma+1}(\R^n; \R^n)$ be vector fields on $\R^n$.
  If $\phi\colon\R^n\to\R$ is a smooth function and $w$ is a nonempty word, then
  \begin{equation}
    \Gamma_{w}\phi(x)=\sum_{k=1}^{\vert w\vert}\frac1{k!}\sum_{\substack{u_1,\dotsc,u_k\\w\in\operatorname{Sh}(u_1,\dotsc,u_k)}}D^k\phi(x)(F_{u_1}(x),\dotsc,F_{u_k}(x)).
    \label{eqn:Gammaphi}
  \end{equation}
\end{lem}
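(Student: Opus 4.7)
The plan is to proceed by induction on $|w|$. For the base case $|w|=1$, $w=i$, the claim reduces to $\Gamma_i\phi(x)=f_i(x)\cdot D\phi(x)=D\phi(x)(F_i(x))$, which is exactly the $k=1$, $u_1=i$ term (recall $F_i=f_i$).

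For the inductive step I would use the defining decomposition $\Gamma_{iw}\phi(x)=f_i(x)\cdot D[\Gamma_w\phi](x)$ and expand $D[\Gamma_w\phi](x)\cdot f_i(x)$ by differentiating the induction hypothesis for $\Gamma_w\phi$. Each summand $\frac1{k!}D^k\phi(\cdot)(F_{u_1}(\cdot),\dotsc,F_{u_k}(\cdot))$ contributes, via the product rule, two kinds of terms:
\[ D^{k+1}\phi(x)(F_{u_1},\dotsc,F_{u_k},f_i(x))\quad\text{and}\quad\sum_{j=1}^kD^k\phi(x)(F_{u_1},\dotsc,DF_{u_j}(x)f_i(x),\dotsc,F_{u_k}). \]
The crucial simplification is now the recursion $F_{iu}=DF_u\cdot f_i$ from \eqref{Gamma_identity} together with $f_i=F_i$, which rewrites these as $D^{k+1}\phi(F_{u_1},\dotsc,F_{u_k},F_i)$ and $D^k\phi(F_{u_1},\dotsc,F_{iu_j},\dotsc,F_{u_k})$, respectively.

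It remains to reassemble these terms into the claimed expression for $\Gamma_{iw}\phi$. The relevant combinatorial fact is a shuffle--deconcatenation identity: in any interleaving of $(v_1,\dotsc,v_l)$ producing $iw$, the leading letter $i$ must be contributed by the first letter of a (necessarily unique) $v_j=iv_j'$, which yields
\[ \langle e_{iw}^*,e_{v_1}\shuffle\dotsm\shuffle e_{v_l}\rangle=\sum_{j:\,v_j=iv_j'}\langle e_w^*,e_{v_1}\shuffle\dotsm\shuffle e_{v_j'}\shuffle\dotsm\shuffle e_{v_l}\rangle. \]
Splitting the sum on the right-hand side of \eqref{eqn:Gammaphi} for $iw$ according to whether $v_j'=\varepsilon$ (so $v_j=i$ and $F_{v_j}=F_i$) or $v_j'\neq\varepsilon$ (so $v_j=iu_j$ and $F_{v_j}=F_{iu_j}$) recovers precisely the two families produced in the differentiation step; the symmetry of $D^{k+1}\phi$ is invoked to collapse the $k+1$ equivalent insertion positions of a standalone $F_i$, converting $\frac1{(k+1)!}\cdot(k+1)$ into $\frac1{k!}$. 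The main obstacle is precisely this final combinatorial bookkeeping, which tracks the shuffle multiplicities against the symmetry of higher derivatives; the argument closely mirrors the one already deployed in the proof of \Cref{thm:ito}.
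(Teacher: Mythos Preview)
Your proposal is correct and follows essentially the same route as the paper: induction on $|w|$, prepending a letter in the inductive step, expanding $\Gamma_j(\Gamma_w\phi)$ via the Leibniz rule into the two families $D^{k+1}\phi(f_j,F_{u_1},\dotsc,F_{u_k})$ and $D^k\phi(F_{u_1},\dotsc,F_{ju_r},\dotsc,F_{u_k})$, and then reassembling using the shuffle combinatorics together with the symmetry of $D^{k+1}\phi$. Your explicit shuffle--deconcatenation identity is exactly the combinatorial content the paper uses (implicitly) when it rewrites the sums over $w\in\Sh(u_1,\dotsc,u_k)$ as sums with $jw\in\Sh(u_1,\dotsc,ju_r,\dotsc,u_k)$ or $jw\in\Sh(u_1,\dotsc,j,\dotsc,u_k)$.
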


\begin{proof}
Before commencing we introduce some notation.
If $\phi\colon\R^n\to\R$ and $g_1,\dotsc,g_k\colon\R^n\to\R^n$ are smooth functions, we define
\[ D^k\phi(x):(g_1,\dotsc,g_k)\coloneq D^k\phi(x)(g_1(x),\dotsc,g_k(x)) \]
where the right-hand side was defined in \cref{eqn:diffk}.
The Leibniz rule then gives that for any $h\in\R^n$ we have
\[ h\cdot D_x\left(D^k\phi(x):(g_1,\dotsc,g_k) \right)=D^{k+1}\phi(x):(h,g_1,\dotsc,g_k)+\sum_{i=1}^kD^k\phi(x):(g_1,\dotsc,(D_xg_i)h,\dotsc,g_k). \]

We now prove the result by induction on the word's length $\vert w \vert$.
If $w=i$ is a single letter then $\Gamma_i\phi(x)=f_i(x)\cdot \nabla\phi(x)=D\phi(x)f_i(x)$ which is exactly \cref{eqn:Gammaphi}.
Supposing the identity true for any word $w'$  such that $\vert w'\vert \leq \vert w\vert$, we prove it for $jw$ where $j\in \{1,\dotsc,d\}$. By induction one has
\[\begin{split}
\Gamma_{w}\phi(x)&=\sum_{k=1}^{\vert w\vert}\frac1{k!}\sum_{\substack{u_1,\dotsc,u_k\\w\in\operatorname{Sh}(u_1,\dotsc,u_k)}}D^k\phi(x)(F_{u_1}(x),\dotsc,F_{u_k}(x)).
\end{split}\]
By the above form of Leibiz's rule, with $g_i=F_{u_i}$ and $h=f_j(x)$, and noticing that by definition
\[ D_xF_{u_i}(x)f_j(x)=F_{ju_i}(x) \]
we obtain that
\[\begin{split}
\Gamma_{j}\left( D^k\phi(x):(F_{u_1},\dotsc,F_{u_k})\right)&=D^{k+1}\phi(x):(f_j,F_{u_1},\dotsc,F_{u_k})\\&+\sum_{i=1}^kD^k\phi(x):(F_{u_1},\dotsc,F_{ju_i},\dotsc,F_{u_k})\,. 
\end{split}\]
Summing this expression over  words $u_1,\dotsc,u_k$, we can rewrite it as
\[\begin{split}
    &\sum_{r=1}^k \sum_{\substack{u_1,\dotsc,u_k\\ jw\in\operatorname{Sh}(u_1,\dotsc, ju_r, \dotsc,u_k)}}D^{k} \phi(x):(F_{u_1},\dotsc,F_{ju_r},\dotsc,F_{u_k})\\&+\frac{1}{k+1}\sum_{r=1}^{k+1}\sum_{\substack{u_1,\dotsc,u_k\\ jw\in\operatorname{Sh}(u_1,\dotsc, j, \dotsc,u_k)}}D^{k+1} \phi(x):(F_{u_1},\dotsm,\overbrace{f_{j}}^{\text{$r$th place}},\dotsc, F_{u_k}),
 \end{split}\]
 the factor $1/(k+1)$ is introduced because of the symmetry of $D^{k+1}\phi(x)$. Summing finally over $k$, we can express the final expression as
\[\begin{split}
    \Gamma_{jw}\phi(x)=&\sum_{k=1}^{\vert w\vert}\frac{1}{k!}\sum_{r=1}^k \sum_{\substack{u_1,\dotsc,u_k\\ jw\in\operatorname{Sh}(u_1,\dotsc, ju_r, \dotsc,u_k)}}D^{k} \phi(x):(F_{u_1},\dotsm,F_{ju_r},\dotsc, F_{u_k})\\&+\sum_{k=1}^{\vert w\vert} \frac{1}{(k+1)!}\sum_{r=1}^{k+1}\sum_{\substack{u_1,\dotsm,u_k\\ jw\in\operatorname{Sh}(u_1,\dotsc,j,\dotsc,u_k)}}D^{k+1}\phi(x):(F_{u_1},\dotsm,\overbrace{f_{j}}^{\text{$r$th place}},\dotsc,F_{u_k}).
 \end{split}\]
Since the letter $j$ may appear as a single word or concatenated at the right with some word, we finally identify the whole  expression above with
\[\begin{split}
&\sum_{k=1}^{\vert w\vert +1}\frac1{k!}\sum_{\substack{u_1,\dotsc,u_k\\ja\in\operatorname{Sh}(u_1,\dotsc,u_k)}}D^k\phi(x):(F_{u_1},\dotsc,F_{u_k}).
\end{split}\]
\end{proof}
Now we show a formula for the composition of the solution to the RDE \eqref{eqn:RDE} and a sufficiently smooth function.
\begin{thm}[Itô formula for RDEs]\label{rough_Ito}
  Let $f_i\in \CC^{N_\gamma+1}$ and let $\bX\in\D^{(N_\gamma+1)\gamma}_{\bW}$ be the unique solution of \cref{eqn:RDE} and $X_t=\langle\1^*,\bX_t\rangle$. Then for any real valued function $\phi\in \CC^{N_{\gamma}+1}_b(\R^n)$ one has the identity
\begin{equation}\label{ito}
\phi(X_t)=\phi(X_s)+ \sum_{i=1}^d\int_s^t(\Gamma_i\phi)(X_r)\,\mathrm d\bW^i_r\,.
\end{equation}
More generally, one has the following estimates at the level of controlled rough paths
\begin{equation}
\langle e^*_{w}, \mathbf{\Phi}(\bX)_t\rangle\underset{(N_{\gamma}+1-\vert w \vert) \gamma}{=}\langle e^*_{w},\mathbf{\Phi}(\bX)_s\rangle+ \left\langle e^*_w,\sum_{i=1}^d\int_s^t(\Gamma_i\mathbf{\Phi})(\bX)_r\,\mathrm d\bW^i_r\right\rangle,
\label{eqn:general_ito}
\end{equation}
where $\Gamma_i\mathbf{\Phi}(\bX)$ is the controlled lift of composition of $\bX$ with the function $\Gamma_i\phi\in\CC^{N_\gamma}$ and any non-empty word such that $\vert w\vert \leq N_{\gamma}$.
\end{thm}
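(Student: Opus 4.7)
The strategy is to recognise both sides of \eqref{ito}, after subtracting $\phi(X_s)$, as the $\mathbf{1}$\nobreakdash-coefficient of two controlled rough paths in $\D^{(N_\gamma+1)\gamma}_{\bW}$ whose higher-order Gubinelli coefficients coincide, and then invoke the standard fact that a function with uniform increments of order $|t-s|^{(N_\gamma+1)\gamma}$, $(N_\gamma+1)\gamma>1$, vanishing at one point, must vanish identically.

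The first step, which carries all the combinatorial content, is to identify the coefficients of $\mathbf{\Phi}(\bX)$. Substituting Davie's identity $\langle e_u^*,\bX_t\rangle=F_u(X_t)$ from \Cref{prop:Davie} into the defining formula \eqref{eqn:taylorctrl} and invoking \Cref{lem:comb2} yields
\[ \langle e_w^*,\mathbf{\Phi}(\bX)_t\rangle=\Gamma_w\phi(X_t)\qquad\text{for every }|w|\le N_\gamma, \]
and in particular $\langle\1^*,\mathbf{\Phi}(\bX)_t\rangle=\phi(X_t)$. Since $\bX\in(\D^{(N_\gamma+1)\gamma}_{\bW})^n$ and $\phi\in\CC^{N_\gamma+1}_b$, \Cref{thm:ito} places $\mathbf{\Phi}(\bX)$ in $\D^{(N_\gamma+1)\gamma}_{\bW}$, so the controlled-path bound \eqref{eqn:ctrlbound} at $w=\1$ reads
\[ \phi(X_t)-\phi(X_s)\underset{(N_\gamma+1)\gamma}{=}\sum_{1\le|v|\le N_\gamma}\Gamma_v\phi(X_s)\langle\bW_{st},e_v\rangle. \]
Applying the same identification with $\phi$ replaced by $\Gamma_i\phi\in\CC^{N_\gamma}_b$, the lift $\Gamma_i\mathbf{\Phi}(\bX)\in\D^{N_\gamma\gamma}_{\bW}$ has coefficients $\langle e_w^*,\Gamma_i\mathbf{\Phi}(\bX)_s\rangle=\Gamma_{wi}\phi(X_s)$. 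Plugging this into the rough-integral estimate \eqref{equ:rough_int_est}, summing over $i$ and re-indexing $v=wi$, I obtain
\[ \sum_{i=1}^d\int_s^t\Gamma_i\phi(X_r)\,\mathrm d\bW^i_r\underset{(N_\gamma+1)\gamma}{=}\sum_{1\le|v|\le N_\gamma}\Gamma_v\phi(X_s)\langle\bW_{st},e_v\rangle. \]
Subtracting the two displays, the function $g_t\coloneq\phi(X_t)-\phi(X_0)-\sum_i\int_0^t\Gamma_i\phi(X_r)\,\mathrm d\bW^i_r$ satisfies $|g_t-g_s|\lesssim|t-s|^{(N_\gamma+1)\gamma}$ uniformly in $s,t\in[0,T]$; as $(N_\gamma+1)\gamma>1$ and $g_0=0$, it vanishes identically, proving \eqref{ito}.

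The generalisation \eqref{eqn:general_ito} is essentially free from the coefficient identification of Step~1: for any nonempty word $v=w'i$, formula \eqref{eqn:roughint_coef} combined with $\langle e_{w'}^*,\Gamma_i\mathbf{\Phi}(\bX)_t\rangle=\Gamma_v\phi(X_t)$ gives
\[ \left\langle e_v^*,\sum_{i=1}^d\int_s^t\Gamma_i\mathbf{\Phi}(\bX)_r\,\mathrm d\bW^i_r\right\rangle=\Gamma_v\phi(X_t)-\Gamma_v\phi(X_s)=\langle e_v^*,\mathbf{\Phi}(\bX)_t-\mathbf{\Phi}(\bX)_s\rangle, \]
an exact equality; the case $v=\1$ is precisely \eqref{ito}. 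The main technical load is concentrated in \Cref{lem:comb2}, already proved above; everything else is algebraic bookkeeping of shuffles together with the trivial Hölder-vanishing argument, so I expect the hardest part of the write-up will simply be presenting the re-indexing $v=wi$ cleanly.
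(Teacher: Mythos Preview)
Your proposal is correct and follows essentially the same route as the paper: identify the coefficients $\langle e_w^*,\mathbf{\Phi}(\bX)_t\rangle=\Gamma_w\phi(X_t)$ via \Cref{prop:Davie} and \Cref{lem:comb2}, do the same for the integrand lift to get $\langle e_{w'i}^*,\sum_j\int\Gamma_j\mathbf{\Phi}(\bX)\,\mathrm d\bW^j\rangle=\Gamma_{w'i}\phi(X_t)$, and then compare the two controlled expansions. Your observation that \eqref{eqn:general_ito} is in fact an exact equality for nonempty $w$ (rather than merely a $(N_\gamma+1-|w|)\gamma$ estimate) is correct and slightly sharper than what the paper states, though the paper's proof implicitly contains it as well.
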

\begin{proof}[Proof of Theorem \ref{rough_Ito}]
  The theorem is obtained by comparing the coefficients of the controlled rough paths $ \mathbf{\Phi}(\bX)_t$ and $\int_0^t(\Gamma_i\mathbf{\Phi})(X_r)\,\mathrm d\bW^i_r$ for every $i=1,\dotsc,d$. Using Lemma \ref{lem:comb2} and Proposition \ref{prop:Davie}, for every non-empty  word $a$ one has
\begin{equation}\label{proof_comp}
\begin{split}
  \langle e^*_{w},  \mathbf{\Phi}(\mathbf{X})_t\rangle&=\sum_{k=1}^{\vert w\vert}\frac1{k!}\sum_{\substack{u_1,\dotsc,u_k\\w\in\operatorname{Sh}(u_1,\dotsc,u_k)}}D^k\phi(X_t)(t;u_1,\dotsc,u_k)\\
  &=\sum_{k=1}^{\vert w\vert}\frac1{k!}\sum_{\substack{u_1,\dotsc,u_k\\w\in\operatorname{Sh}(u_1,\dotsc,u_k)}}D^k\phi(X_t):(F_{u_1},\dotsc,F_{u_k})\\
  &= \Gamma_w\phi(X_t).
\end{split}
\end{equation}
Using the same identities we also deduce for any word $w$,
\begin{equation}\label{proof_int}
\left\langle e^*_{wj},\sum_{i=1}^d\int_0^t(\Gamma_i\mathbf{\Phi})(\bX)_r\,\mathrm d\bW^i_r\right\rangle= \langle e^*_{w},(\Gamma_j\mathbf{\Phi})(\bX)_t\rangle= \Gamma_{w}(\Gamma_j\phi)(X_t)= \Gamma_{wj}\phi( X_t).
\end{equation}
Since $\sum_{i=1}^n\int_0^t(\Gamma_i\mathbf{\Phi})(\bX)_r\,\mathrm d\bW^i_r
$ and $\mathbf{\Phi}(\mathbf{X})_t$ belong both to $\D_{\bW}^{(N_{\gamma}+1)\gamma}$ for any word $w$ one has both
\begin{equation*}
  \langle e_w^*,\mathbf{\Phi}(\mathbf{X})_t\rangle- \langle e_w^*,\mathbf{\Phi}(\mathbf{X})_s\rangle \underset{(N_{\gamma}+1-|w|)\gamma}{=}\sum_{0< \vert v\vert\leq N_{\gamma}-|w|}\langle e_{wv}^*,\mathbf{\Phi}(\mathbf{X})_s\rangle\langle\bW_{st}, e_v\rangle,
\end{equation*}
and
\begin{equation*}
  \left\langle e_w^*,\sum_{i=1}^n\int_s^t(\Gamma_i\mathbf{\Phi})(\bX)_r\,\mathrm d\bW^i_r\right\rangle\underset{(N_{\gamma}+1-|w|)\gamma}{=}\sum_{0< \vert v\vert\leq N_{\gamma}-|w|}\left\langle e_{wv}^*,\sum_{i=1}^n\int_0^s(\Gamma_i\mathbf{\Phi})(\bX)_r\,\mathrm d\bW^i_r\right\rangle\langle\bW_{st}, e_v\rangle.
\end{equation*}
The identities \eqref{proof_comp} and \eqref{proof_int} imply that the right-hand sides of the above estimates are the same quantities.
Thus we obtain \cref{eqn:general_ito} by simply subtracting one side from the other. In case $w=\mathbf{1}$ one has
\[\phi(X_t)-\phi(X_s)- \sum_{i=1}^d\int_s^t(\Gamma_i\phi)(X_r)\,\mathrm d\bW^i_r\underset{(N_{\gamma}+1)\gamma}{=}0\,.\]
Since  $(N_{\gamma}+1)\gamma>1$ and the right hand side is the increment of a path, one has the identity \eqref{ito}.
\end{proof}
Using the identities \eqref{proof_comp} we can rewrite the It\^o formula using only the operators $\Gamma_w$.
\begin{cor}[It\^o-Davie formula for RDEs]
Let $X\colon[0,T]\to\R^n$ be the unique solution of  \cref{eqn:RDE}. Then for any real valued function $\phi\in \CC^{N_{\gamma}+1}_b(\R^n)$ and  any word $w$ one has the estimate
\begin{equation}\label{equ:discrete_Ito}
\Gamma_{w}\phi(X_t)   \underset{(N_{\gamma}+1-\vert w\vert ) \gamma}{=}\sum_{0\leq \vert v\vert\leq N_{\gamma}- \vert w\vert }\Gamma_{wv}\phi(X_s) \langle\mathbf{W}_{st},e_v\rangle.
\end{equation}
\end{cor}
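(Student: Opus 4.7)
The plan is to derive the identity by combining two ingredients already at our disposal: the coefficient-wise identification $\langle e_w^*,\mathbf{\Phi}(\bX)_t\rangle=\Gamma_w\phi(X_t)$ proved in \eqref{proof_comp}, and the defining estimate of a controlled rough path applied to $\mathbf{\Phi}(\bX)$. No fresh combinatorics should be needed; this is essentially a cosmetic rewrite of the It\^o formula in which the operators $\Gamma_v$ replace the coefficients of $\mathbf{\Phi}(\bX)$.

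First I would observe that, under the hypothesis $\phi\in\CC^{N_\gamma+1}_b(\R^n)$ and $f_i\in\CC^{N_\gamma+1}$, \Cref{thm:ito} applied to $\bX\in\D^{(N_\gamma+1)\gamma}_{\bW}$ produces a controlled rough path $\mathbf{\Phi}(\bX)\in\D^{(N_\gamma+1)\gamma}_{\bW}$. In particular, by \Cref{rem:coordinates}, for every word $w$ with $|w|\le N_\gamma$ we have the estimate
\[
\langle e_w^*,\mathbf{\Phi}(\bX)_t\rangle\underset{(N_\gamma+1-|w|)\gamma}{=}\sum_{0\le|v|\le N_\gamma-|w|}\langle e_{wv}^*,\mathbf{\Phi}(\bX)_s\rangle\langle\bW_{st},e_v\rangle.
\]

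Next, I would invoke the key identity \eqref{proof_comp}, a direct consequence of \Cref{lem:comb2} and the coefficient formula $\langle e_u^*,\bX_t\rangle=F_u(X_t)$ from \Cref{prop:Davie}, which states that for every word $w$,
\[
\langle e_w^*,\mathbf{\Phi}(\bX)_t\rangle=\Gamma_w\phi(X_t).
\]
Applying this to both sides of the previous display at times $t$ and $s$ (with indices $w$ and $wv$ respectively) replaces every coefficient of $\mathbf{\Phi}(\bX)$ by the corresponding differential operator applied to $\phi$, yielding exactly \eqref{equ:discrete_Ito}.

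There is no serious obstacle: the only potential pitfall is the bookkeeping when $w=\varepsilon$, where one must recall that $\Gamma_\varepsilon=\mathrm{id}$ so that \eqref{equ:discrete_Ito} reduces to the Davie-type expansion obtained from integrating \eqref{ito}; in all other cases the statement is an immediate consequence of the fact that $\mathbf{\Phi}(\bX)$ is itself a controlled rough path whose coefficients have been identified in closed form.
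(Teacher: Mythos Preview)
Your proposal is correct and coincides with the paper's own argument: the corollary is obtained precisely by substituting the identity \eqref{proof_comp}, namely $\langle e_w^*,\mathbf{\Phi}(\bX)_t\rangle=\Gamma_w\phi(X_t)$, into the controlled rough path estimate for $\mathbf{\Phi}(\bX)\in\D^{(N_\gamma+1)\gamma}_{\bW}$. The paper treats this as an immediate rewriting (``Using the identities \eqref{proof_comp} we can rewrite the It\^o formula using only the operators $\Gamma_w$''), and your proposal spells out exactly that.
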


\section{Rough transport and continuity}
\subsection{Rough transport equation}\label{sec:RTE} 
We now consider the rough transport equation
\begin{equation}\label{equ:Transport_RPDE}
\begin{cases}
-\mathrm du_s=\sum_{i=1}^d\Gamma_i u_s\,\mathrm d\bW^i_s,\\
u(T,\cdot)= g(\cdot)
\end{cases}
\end{equation}
where we recall the differential operator $\Gamma_i\coloneq f_i\cdot D_x$ for some vector fields $f_1,\dotsc,f_d$ on $\R^n$.

We now prepare the  definition of a regular solution to the rough transport equation.  Since we are in the fortunate position to have an explicit solution candidate we derive a graded set of rough path estimates that provide a natural generalisation of the classical  transport differential equation.

\begin{defn}\label{cor:RTEex}
  Let $\gamma\in (0,1)$, $\mathbf{W}\in \Cr^{\gamma}$ a weakly-geometric rough path of roughness $\gamma$ and $g\in\CC^{N_\gamma+1}$. A $\CC^{\gamma,N_\gamma+1}$-function $u\colon [0,T]\times \R^n\to\R$ such that $u(T, \cdot)= g(\cdot)$ is said to be a regular solution to the rough transport equation \eqref{equ:Transport_RPDE} if one has the estimates
\begin{equation}\label{equ:estimate_sol}
  \Gamma_{w} u_s (x)\underset{(N_{\gamma}+1-\vert w \vert) \gamma}{=}\sum_{0\leq \vert v\vert\leq N_{\gamma}-\vert w\vert}\Gamma_{ wv}u_t (x)  \langle\mathbf{W}_{st},e_v \rangle\,,
\end{equation}
for every $s<t\in [0,T]$, uniformly on compact sets in $x$ and any word $w$.
\end{defn}

\begin{rem}
Since each application of the vector fields $\Gamma_{i_1\cdots i_n}$ amounts to take $n$ derivatives, these estimates have the interpretation that time regularity of $\Gamma_{i_1\cdots i_n}u$, can be traded against space regularity in a controlled sense.
\end{rem}

\begin{thm} \label{prop:RTEex}
Let $f \in \CC^{2N_\gamma+1}_b$, $g\in \CC^{N_\gamma+1}$ and consider the rough solution $X^{s,x}$ to \cref{eqn:RDE}. Then $u(s, x)\coloneq g(X^{s,x}_T)$ is a solution to the rough transport equation in the sense of \Cref{cor:RTEex}.
\end{thm}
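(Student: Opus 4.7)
The plan is to verify that the candidate $u(s,x) := g(X^{s,x}_T)$ fulfills \Cref{cor:RTEex}. Under the hypothesis $f\in\CC^{2N_\gamma+1}_b$, \Cref{thm:smoothflow} applied with $k=N_\gamma$ shows that the rough flow $x\mapsto X^{s,x}_t$ is a $\CC^{N_\gamma+1}$-diffeomorphism, so $u(s,\cdot)=g\circ X^{s,\cdot}_T\in\CC^{N_\gamma+1}(\R^n)$ whenever $g\in\CC^{N_\gamma+1}$; the $\gamma$-H\"older regularity in $s$ comes from the rough-path regularity of the flow together with the boundedness of $\nabla g$ on compacts. The identity underpinning the whole argument is the cocycle property $X^{s,x}_T = X^{t,X^{s,x}_t}_T$ for $s<t<T$, which reads as the rough analog of the classical method of characteristics,
\[ u_s(x) = u_t(X^{s,x}_t). \]

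For the empty-word case $w=\varepsilon$, I would apply the It\^o--Davie formula \eqref{equ:discrete_Ito} to $\phi=u_t\in\CC^{N_\gamma+1}$ and to the RDE solution $r\mapsto X^{s,x}_r$ on $[s,t]$; combined with the cocycle identity this directly produces
\[ u_s(x)=u_t(X^{s,x}_t) \underset{(N_\gamma+1)\gamma}{=} \sum_{|v|\le N_\gamma}\Gamma_v u_t(x)\,\langle\mathbf{W}_{st},e_v\rangle, \]
which is exactly \eqref{equ:estimate_sol} for $w=\varepsilon$.

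The case of a non-empty word $w$ with $|w|\le N_\gamma$ is the central difficulty, because $\Gamma_w u_s(x)$ is not of the form $\phi(X^{s,x}_t)$ for any smooth $\phi$, and so \eqref{equ:discrete_Ito} cannot be applied directly. I would treat this case by smooth approximation: choose smooth paths $W^n$ whose canonical rough-path lifts $\mathbf{W}^n$ converge to $\mathbf{W}$ (e.g.\ piecewise-linear or geodesic interpolants) with uniformly bounded $\gamma$-H\"older norms, and set $u^n(s,x):=g(X^{n;s,x}_T)$. Since $W^n$ is smooth, classical ODE theory gives the strong-sense transport equation $-\partial_s u^n = \sum_i \Gamma_i u^n\,\dot{W}^{n,i}$, and applying $\Gamma_w$ (which acts only in $x$ and hence commutes with $\partial_s$) produces $-\partial_s(\Gamma_w u^n) = \sum_i \Gamma_{wi}u^n\,\dot{W}^{n,i}$. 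Iterating this identity $N_\gamma-|w|+1$ times via Fubini yields
\[ \Gamma_w u^n_s(x) = \sum_{|v|\le N_\gamma-|w|} \Gamma_{wv}u^n_t(x)\,\langle\mathbf{W}^n_{st},e_v\rangle + R^n(s,t,x), \]
where $R^n$ is an iterated integral of depth $N_\gamma-|w|+1$, bounded by $|t-s|^{(N_\gamma+1-|w|)\gamma}$ uniformly in $n$ thanks to the uniform rough-path control on $\mathbf{W}^n$.

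The main obstacle is then the passage to the limit $n\to\infty$. This requires locally uniform convergence $\Gamma_w u^n \to \Gamma_w u$, equivalently convergence of $u^n$ to $u$ in $\CC^{N_\gamma+1}_{\mathrm{loc}}(\R^n)$ in the spatial variable. This follows from the continuous dependence of the rough flow and of all its spatial partial derivatives up to order $N_\gamma+1$ on $\mathbf{W}$, a content of the continuous-dependence extension of \Cref{thm:smoothflow} together with the Davie-type expansion of \Cref{cor:partialdavie}. Combined with the standard convergence $\langle\mathbf{W}^n_{st},e_v\rangle\to\langle\mathbf{W}_{st},e_v\rangle$ uniformly in $s,t$ and the uniform $|t-s|^{(N_\gamma+1-|w|)\gamma}$-bound on $R^n$, the limit gives exactly \eqref{equ:estimate_sol} for $u$.
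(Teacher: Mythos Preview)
Your treatment of the empty-word case $w=\varepsilon$ via the cocycle identity $u_s(x)=u_t(X^{s,x}_t)$ combined with \eqref{equ:discrete_Ito} is exactly what the paper does. The divergence is for $|w|\ge 1$: the paper stays purely algebraic---it expands $\Gamma_w u_s(x)$ via \Cref{lem:comb2}, applies Fa\`a di Bruno (\Cref{lem:faadibruno}) to $u_s(x)=u_t(X^{s,x}_t)$, inserts the Davie expansion of $\partial^\beta X^{s,x}_t$ from \Cref{cor:partialdavie}, Taylor-expands $D^l u_t$ around $x$, and then regroups all terms combinatorially to recognise $\sum_v \Gamma_{wv}u_t(x)\langle\bW_{st},e_v\rangle$. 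No approximation is used at this stage; the implicit constants depend only on $\|\bW\|_\gamma$ and derivative bounds on $f,g$.

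Your smooth-approximation route is a legitimate idea, but the step ``$R^n$ is bounded by $|t-s|^{(N_\gamma+1-|w|)\gamma}$ uniformly in $n$ thanks to the uniform rough-path control on $\mathbf W^n$'' is where the real work hides, and as stated it is a gap. The remainder $R^n$ is a $(N_\gamma{-}|w|{+}1)$-fold iterated integral with the \emph{time-dependent} integrand $\Gamma_{wv}u^n_r(x)$. Bounding such an object by the Hölder rough-path norm of $\mathbf W^n$ is not automatic: the naive estimate via $1$-variation of $W^n$ blows up, and a Young/sewing argument requires precisely the controlled-path structure $\Gamma_{wv_1}u^n_r \underset{(N_\gamma+1-|wv_1|)\gamma}{=}\sum_{v_2}\Gamma_{wv_1v_2}u^n_t\langle\mathbf W^n_{rt},e_{v_2}\rangle$---which is \eqref{equ:estimate_sol} for $u^n$, i.e.\ the very estimate you are trying to establish. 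You can break the circularity by proving this controlled structure for smooth $W^n$ with rough-path-norm-dependent constants, but doing so amounts to carrying out the paper's direct computation (via \Cref{lem:comb2}, \Cref{lem:faadibruno} and \Cref{cor:partialdavie}) for each $\mathbf W^n$; at that point the approximation is redundant and one may as well apply the direct argument to $\mathbf W$ itself. In short: the approximation strategy is not wrong, but it does not bypass the algebra that the paper performs---it merely relocates it.
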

\begin{proof}
We first note that by \Cref{thm:smoothflow} the map $(s,x)\mapsto X^{s,x}_T$ belongs to $\CC^{\gamma,N_\gamma+1}$. Since $g\in\CC^{N_\gamma+1}$ then $u(s,x)= g(X^{s,x}_T)\in\CC^{\gamma,N_\gamma+1}$. Let us show that $u$ is a solution by proving the estimates given in \Cref{cor:RTEex} for some fixed times $s < t < T$ and $x$ in compact set. By uniqueness of the RDE flow one has $X^{s, x}_T = X^{t, y}_T $ where $y = X^{s,x}_t$. Thus we deduce from the definition of $u$  the identity
\begin{equation} \label{composition}
u_s(x) = u_t(X^{s,x}_t).
\end{equation}
Let $\bX$ denote the controlled rough path such that $X^{s,x}_t=\langle\1^*,\bX_t\rangle$. Since $g\in\CC_b^{N_{\gamma}+1}$, we can apply the rough It\^o formula in \cref{equ:discrete_Ito} to the function $x\to u_t(x)$ obtaining
 \[ u_t(X^{s,x}_t)\underset{(N_{\gamma}+1)\gamma}{=}\sum_{|w|\le N}\Gamma_w u_t(x)\langle\bW_{st},w\rangle \]
 obtaining \eqref{equ:estimate_sol} for the case of $w=\varepsilon$.
 To show the estimates on $\Gamma_{i_1\cdots i_l}u_s$, we apply Lemma \ref{lem:comb2} to the function $x\to u_s(x)$
\begin{equation}\label{higher_order}
\Gamma_{w}u_s(x) = \sum_{k=1}^{\vert w \vert  }\frac{1}{k!}\sum_{\substack{u_1,\dotsc,u_k\\ w\in\Sh(u_1,\dotsc,u_k)}}D^ku_s(x)(F_{u_1}(x),\dotsc,F_{u_k}(x)).
\end{equation}
Using again the identity \eqref{composition}, for any word $\alpha$ we apply \cref{Faa_di_bruno} obtaining
\[\partial^\alpha (u_s(x))= \sum_{l=1}^k \frac{1}{l!}\sum_{\substack{\beta_1,\dotsc,\beta_l\\ \alpha\in\operatorname{Sh}(\beta_1,\dotsc,\beta_l)}}D^lu_t(X^{s,x}_t)(\partial^{\beta_1}X^{s,x}_t,\cdots , \partial^{\beta_l}X^{s,x}_t).\]
Since the vector field $f\in C_b^{2N_{\gamma}+1}$ and every $\beta_i$ such that $ \alpha\in\operatorname{Sh}(\beta_1,\dotsc,\beta_l)$ satisfies $\vert \beta_i \vert \leq \vert a\vert$ we can apply \Cref{cor:partialdavie} to get
\[\partial^{\beta_i}X_t^{s,x}\underset{(N_{\gamma}+1-\vert w \vert)\gamma}{=}\sum_{0\leq |v|\leq  N_{\gamma}-\vert w \vert }\partial^{\beta_i}F_v(x)\langle\bW_{st}, e_v\rangle\,\]
Plugging these estimates in $D^lu_t(X^{s,x}_t)$ and one has
\begin{equation}
\begin{gathered}
 D^lu_t(X^{s,x}_t)(\partial^{\beta_1}X^{s,x}_t,\dotsc,\partial^{\beta_l}X^{s,x}_t)\underset{(N_{\gamma}+1-\vert w \vert)\gamma}{=}\\\sum_{\substack{0\leq \vert v_1\vert \cdots \vert v_l\vert \leq N_{\gamma}-\vert a\vert}}D^lu_t(X^{s,x}_t)\left(\partial^{\beta_1}F_{v_1}(x), \dotsc ,\partial^{\beta_l}F_{v_l}(x)\right)\langle\bW_{st},e_{v_1}\shuffle\dotsm\shuffle e_{v_l}\rangle.
\end{gathered}
\end{equation}
Plugging this expression into \eqref{higher_order} and we obtain
\begin{equation}\label{last_comb}
\begin{gathered}
\Gamma_w u_s(x)\underset{(N_{\gamma}+1-\vert w \vert)\gamma}{=}\sum_{k=1}^{\vert w \vert}\sum_{\substack{u_1,\dotsc,u_k\\ w\in\operatorname{Sh}(u_1,\dotsc,u_k)}} \sum_{\alpha_1\,, \cdots \,,\alpha_k=1}^n\sum_{l=1}^k \frac{1}{l!}\frac{1}{k!}F^{\alpha_1}_{u_1}(x)\dotsm F^{\alpha_k}_{u_k}(x)\\
 \sum_{\substack{0\leq \vert d_1\vert \cdots \vert d_l\vert \leq N_{\gamma}-\vert a\vert}}\sum_{\substack{\beta_1,\dotsc,\beta_l\\ \alpha\in\operatorname{Sh}(\beta_1,\dotsc,\beta_l)}}D^lu_t(X^{s,x}_t)(\partial^{\beta_1}F_{v_1}(x), \dotsc ,\partial^{\beta_l}F_{v_l}(x))\langle\bW_{st},e_{v_1}\shuffle\dotsm\shuffle e_{v_l}\rangle.
\end{gathered}
\end{equation}
Rearranging the sums and applying the definition of the functions $F_w$ we obtain the identity
\[\begin{split}
 &\sum_{k=l}^{\vert w \vert}\frac{1}{k!}\sum_{\substack{u_1,\dotsc,u_k\\ w\in\operatorname{Sh}(u_1,\dotsc,u_k)}} \sum_{\substack{\alpha\in\operatorname{Sh}(\beta_1,\dotsc,\beta_l)\\ \vert \alpha \vert=k}}D^lu_t(X^{s,x}_t)(\partial^{\beta_1}F_{v_1}(x), \dotsc ,\partial^{\beta_l}F_{v_l}(x))F^{\alpha_1}_{u_1}(x)\dotsm F^{\alpha_k}_{u_k}(x)\\&=\sum_{\substack{u'_1,\dotsc,u'_l\\ w\in\operatorname{Sh}(u'_1,\dotsc,u'_l)}}D^lu_t(X^{s,x}_t)(F_{u'_1v_1}(x), \dotsm ,F_{u'_lv_l}(x)).
 \end{split}\]
Therefore the right-hand side of \eqref{last_comb} becomes
\begin{equation}\label{almost_end}
\sum_{l=1}^{\vert w \vert} \sum_{\substack{0\leq \vert v_1\vert \cdots \vert v_l\vert \leq N_{\gamma}-\vert w\vert}}\sum_{\substack{u'_1,\dotsc,u'_l\\ w\in\operatorname{Sh}(u'_1,\dotsc,u'_l)}}\frac{1}{l!}D^lu_t(X^{s,x}_t)(F_{u'_1v_1}(x), \dotsm ,F_{u'_lv_l}(x))\langle\bW_{st},e_{v_1}\shuffle\dotsm\shuffle e_{v_l}\rangle.
\end{equation}
We perform now  a Taylor expansion of $D^lu_t(X^{s,x}_t)$ up to order $ N- \vert w \vert $ between $X_t^{s,x}$ and $x$, yielding for any words $u'_1,\cdots, u'_k$
\begin{equation}
\begin{split}
&D^lu_t(X_t^{s,x})\bigg(F_{u'_1v_1}(x), \dotsm ,F_{u'_lv_l}(x)\bigg)\underset{(N_{\gamma} +1-\vert w \vert )\gamma}{=}\\& \sum_{m=0}^{N-\vert w\vert}\frac{1}{m!}D^{l+m}u_t(x)\left((X_t^{s,x}-x)^{\otimes m} ,F_{u'_1v_1}(x), \dotsm ,F_{u'_lv_l}(x)\right).
\end{split}
\end{equation}
Plugging now the Davie expansion  \eqref{eqn:rdesol_local} truncated at order $N_{\gamma}-\vert w\vert$  into \eqref{almost_end} we have the following estimate
\begin{equation}\label{hard_combinatorics2}
\begin{split}&\Gamma_w (u_s(x))\underset{(N_{\gamma}+1 -\vert w \vert)\gamma}{=}\sum_{l=1}^{\vert w \vert}\sum_{m=0}^{N_{\gamma}-\vert w\vert}\frac{1}{l!}\frac{1}{m!}\sum_{\substack{u'_1,\dotsc,u'_l\\ w\in\operatorname{Sh}(u'_1,\dotsc,u'_l)}}\sum_{\substack{0\leq \vert v_1\vert \cdots \vert v_l\vert \leq N-\vert w\vert\\0<\vert z_1\vert \cdots \vert z_{m}\vert \leq N-\vert w \vert}}\\&D^{l+m}u_t(x):(F_{u'_1v_1}, \dotsc ,F_{z_1}, \dotsc)\langle\bW_{st},e_{v_1}\shuffle\dotsm\shuffle e_{z_1}\shuffle\cdots \rangle\,.
\end{split}
\end{equation}
Using the symmetry of $D^{l+m}u_t(x)$, we deduce
\[\frac{l!m!}{(l+m)!}\sum_{ I_l\sqcup J_m= \{1,\cdots,m+l\}} D^{m+l}\phi(x):(F_{u'_{i_1}v_{i_1}}, \dotsm ,  F_{z_{j_1}}, \cdots,F_{u'_{i_l}v_{i_{l}}}, \cdots).\]
Replacing this expression in the right-hand side of \eqref{hard_combinatorics2}, we can easily verify that the resulting expression is equal to the sum
 \[ \sum_{0\leq  \vert v \vert\leq N-\vert w\vert }\sum_{n=1}^{|w|+ \vert v\vert}\sum_{\substack{u_1,\dotsc,u_n \\ wv\in\operatorname{Sh}(u_1,\dotsc,u_n)}}\frac{1}{n!}D^n u_t(x):(F_{u_1}, \dotsm ,F_{u_n})\langle\bW_{st},e_{v}\rangle.\]
Thereby proving the result.
\end{proof}

We can now show that solutions in the sense of \Cref{cor:RTEex} are unique.

\begin{thm}
Let $f_i \in \CC^{2N_{\gamma}+1}_b$ with associated differential operators $\Gamma_i$, and $\bW \in \Cr^\gamma$. Given regular terminal data $g \in \CC^{N_{\gamma}+1}$, there exists a unique regular solution to the rough transport equation \eqref{equ:Transport_RPDE}.
\end{thm}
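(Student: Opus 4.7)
The plan is to follow the classical method of characteristics: for any regular solution $u$, show that $u$ is constant along the RDE flow, so that $u_{s_0}(x_0) = g(X_T^{s_0,x_0})$, which coincides with the solution constructed in \Cref{prop:RTEex}. Fix $(s_0,x_0)\in[0,T]\times\R^n$ and set
\[ \phi(t) \coloneq u_t\bigl(X_t^{s_0,x_0}\bigr),\qquad t\in[s_0,T]. \]
The core claim is that $\phi$ is constant. Once this is established, evaluating at $t=s_0$ and $t=T$ gives $u_{s_0}(x_0)=g(X_T^{s_0,x_0})$, yielding uniqueness.

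To prove constancy of $\phi$, pick $s_0\le s<t\le T$ and let $y\coloneq X_s^{s_0,x_0}$, so that $X_t^{s_0,x_0}=X_t^{s,y}$ by the flow property. First I apply the It\^o--Davie formula \eqref{equ:discrete_Ito} (with $w=\varepsilon$) to the function $x\mapsto u_t(x)$, which lies in $\CC^{N_\gamma+1}$ by the $\CC^{\gamma,N_\gamma+1}$ regularity assumption, obtaining
\[ u_t(X_t^{s,y}) \underset{(N_\gamma+1)\gamma}{=} \sum_{0\le|v|\le N_\gamma}\Gamma_v u_t(y)\,\langle\bW_{st},e_v\rangle. \]
Second, the very definition of regular solution \eqref{equ:estimate_sol} (again with $w=\varepsilon$) gives
\[ u_s(y) \underset{(N_\gamma+1)\gamma}{=} \sum_{0\le|v|\le N_\gamma}\Gamma_v u_t(y)\,\langle\bW_{st},e_v\rangle. \]
Subtracting these two expansions, the leading sums cancel and I obtain
\[ \phi(t)-\phi(s) = u_t(X_t^{s,y})-u_s(y) = \OO\bigl(|t-s|^{(N_\gamma+1)\gamma}\bigr), \]
with an implicit constant that depends only on $\bW$, on the flow-invariant compact set containing the trajectory $\{X_r^{s_0,x_0}:r\in[s_0,T]\}$, and on the $\CC^{N_\gamma+1}$-seminorms of $u_r$ over that set, which are uniformly bounded in $r$ by the $\CC^{\gamma,N_\gamma+1}$ assumption.

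Since $(N_\gamma+1)\gamma>1$, a standard sewing/telescoping argument closes the proof: partitioning $[s,t]$ into $n$ equal subintervals and using the above estimate on each piece yields
\[ |\phi(t)-\phi(s)| \le C\,n\cdot\Bigl(\tfrac{|t-s|}{n}\Bigr)^{(N_\gamma+1)\gamma} = C|t-s|^{(N_\gamma+1)\gamma}\,n^{1-(N_\gamma+1)\gamma}\xrightarrow{n\to\infty}0, \]
so $\phi$ is constant on $[s_0,T]$. The main obstacle is really already absorbed in the It\^o--Davie formula and \Cref{lem:comb2}; the key conceptual point is that the Davie-type expansion of $X_t^{s,y}$ combined with the shuffle relations of the geometric rough path $\bW$ produces exactly the same combinatorial sum $\sum_{|v|\le N_\gamma}\Gamma_v u_t(y)\langle\bW_{st},e_v\rangle$ as appears in the definition of regular solution, which is precisely the reason the hyperbolic PDE is well-posed at arbitrarily low regularity. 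Existence being supplied by \Cref{prop:RTEex}, this completes the proof.
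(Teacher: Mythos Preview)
Your argument is correct, and it takes a genuinely different (and shorter) route than the paper's own proof.

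The paper proves the same constancy of $t\mapsto u_t(X_t)$ by a \emph{reverse induction} on word length: it establishes the whole family of estimates
\[
  \Gamma_{w}u_t(X_t)\underset{(N_\gamma+1-|w|)\gamma}{=}\Gamma_{w}u_s(X_s),\qquad 0\le|w|\le N_\gamma,
\]
starting from $|w|=N_\gamma$. The inductive step decomposes $\Gamma_{w}u_t(X_t)-\Gamma_{w}u_s(X_s)$ through the intermediate point $\Gamma_{w}u_s(X_t)$; the two resulting pieces expand at \emph{different} base points (time $t$, space $X_t$ versus time $s$, space $X_s$), so the leading terms do not cancel outright and one must invoke the induction hypothesis on longer words to close the estimate.

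You bypass this entirely by noticing that both $u_t(X_t^{s,y})$ and $u_s(y)$ share the \emph{same} local expansion $\sum_{|v|\le N_\gamma}\Gamma_v u_t(y)\langle\bW_{st},e_v\rangle$: the first by the It\^o--Davie formula \eqref{equ:discrete_Ito} applied to the fixed $\CC^{N_\gamma+1}$ function $u_t$, the second directly from the $w=\varepsilon$ case of the solution definition \eqref{equ:estimate_sol}. Hence the difference is $O(|t-s|^{(N_\gamma+1)\gamma})$ in one step. Your care about uniformity of the implicit constants (compactness of the trajectory, uniform-in-$t$ bounds on $\|u_t\|_{\CC^{N_\gamma+1}}$) is appropriate and sufficient. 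A pleasant by-product of your route is that it reveals that only the $w=\varepsilon$ estimate in \Cref{cor:RTEex} is actually needed for uniqueness, the higher-order ones being absorbed in the It\^o--Davie machinery; the paper's induction, on the other hand, yields the full hierarchy $\Gamma_w u_t(X_t)\approx\Gamma_w u_s(X_s)$ as a side result.
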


\begin{proof}
Existence is clear, since Proposition \ref{prop:RTEex} exactly says that $(t, x) \mapsto g (X^{t, x}_T)$ gives a regular solution.
Let now $u$ be any solution to the rough transport equation. We show that, whenever $X=X^{\bar{s}, \bar{y}}$  for every $\bar{s}, \bar{y} $ one has the estimate
\begin{equation}\label{equ:uniqueness}
u (t, X_t) - u (s, X_s) \underset{(N_{\gamma}+1) \gamma}{=} 0.
\end{equation}
Since $(N_{\gamma}+1) \gamma > 1$ this entails that $t \mapsto u (t, X_t)$ is  constant, and so we recover the uniqueness from the identities
\[u (s, x) =u (s, X^{s, x}_s) =u (T, X^{s, x}_T) = g (X^{s, x}_T)\,.\]
To prove \eqref{equ:uniqueness} we show that for every $k = 0,\cdots, N_{\gamma}$  and any choice of indexes $i_1,\cdots, i_k $ (if $k=0$ we do not consider indexes) one has  the estimates
\[ \Gamma_{i_1\cdots i_k} u_t (X_t) \underset{(N_{\gamma}+1-k) \gamma}{=} \Gamma_{i_1\cdots i_k}u_s (X_s).\]
Let us prove this estimate by reverse induction on the indices length. The case when the indices $i_1\cdots i_{N_{\gamma}}$ have length $N_{\gamma}$ comes easily from the algebraic manipulation
\[\begin{split}
\Gamma_{i_1\cdots i_{N_{\gamma}}} u_t (X_t) - \Gamma_{i_1\cdots i_{N_{\gamma}}}u_s (X_s) &=  \bigg(\Gamma_{i_1\cdots i_{N_{\gamma}}} u_t (X_t) - \Gamma_{i_1\cdots i_{N_{\gamma}}} u_s (X_t)\bigg) \\&+\bigg( \Gamma_{i_1\cdots i_{N_{\gamma}}} u_s (X_t) - \Gamma_{i_1\cdots i_{N_{\gamma}}} u_s (X_s) \bigg)\,.
\end{split}\]
Using the defining property of a solution in the estimates \eqref{equ:estimate_sol}, the first difference on the right-hand side is of order $\gamma$. Moreover by hypothesis on $u$ one has $\Gamma_{i_1\cdots i_{N_{\gamma}}}u_s (\cdot)\in C^1$, always uniformly in $s \in [0, T]$, therefore the second difference is also of order $\gamma$, as required. Supposing the estimate true for every indices of length $k$ we will prove it on every indices $i_1\cdots i_{k-1} $ of length $k-1$ . By  repeating the same procedure as before we obtain
\[\begin{split}
 \Gamma_{i_1\cdots i_{k-1}} u_t (X_t)-\Gamma_{i_1\cdots i_{k-1}}u_s (X_s) &=  \underbrace{\bigg(\Gamma_{i_1\cdots i_{k-1}} u_t (X_t) - \Gamma_{i_1\cdots i_{k-1}} u_s (X_t)\bigg)}_I \\&+\underbrace{\bigg( \Gamma_{i_1\cdots i_{k-1}} u_s (X_t) - \Gamma_{i_1\cdots i_{k-1}} u_s (X_s) \bigg)}_{II}.
 \end{split}\]
Using the definition of a solution, the first difference on the right-hand side satisfies
\[I\underset{({N_{\gamma}}+1-k) \gamma}{=}- \sum_{k=1}^{{N_{\gamma}}+1-k} \sum_{\vert w\vert=k }\Gamma_{i_1\cdots i_{k-1} w}u_t (X_t)  \langle\mathbf{W}_{st},w\rangle.\]
On the other hand, using Lemma \ref{lem:comb2} two times we write $\Gamma_{i_1\cdots i_{k-1}} u_s(X_t)=  \langle e^*_{i_1\dotsm i_{k-1}}, \mathbf{U}_s(\mathbf{X})_t\rangle$ so that the second difference can be replaced by the usual remainder
\[ \begin{split}
II \underset{({N_{\gamma}}+1-k) \gamma}{=}&\sum_{k=1}^{{N_{\gamma}}+1-k} \sum_{\vert w\vert=k }\langle e^*_{ i_1\cdots i_{k-1}w}, \mathbf{U}_s(\mathbf{X})_s\rangle\langle \mathbf{W}_{st},w  \rangle\\\underset{({N_{\gamma}}+1-k) \gamma}{=}&\sum_{k=1}^{{N_{\gamma}}+1-k} \sum_{\vert w\vert=k }\Gamma_{ i_1\cdots i_{k-1}w}u_s (X_s)  \langle\mathbf{W}_{st},w  \rangle.\end{split}\]
Combining the two estimates we obtain
\[ I+II= -\sum_{k=1}^{{N_{\gamma}}+1-k}\sum_{\vert w\vert=k }\bigg(\Gamma_{ i_1\cdots i_{k-1}w}u_t (X_t) -\Gamma_{ i_1\cdots i_{k-1}w}u_s (X_s) \bigg) \langle\mathbf{W}_{st},w  \rangle\,. \]
Since the terms in the sum involve the increment $ \Gamma_{\sigma} u_t (X_t)- \Gamma_{\sigma} u_s (X_s)$ where $\sigma$ has length bigger or equal than $k$ we apply the recursive hypothesis obtaining that each term satisfies
\[\Gamma_{i_1\cdots i_{k-1}w} u_t (X_t)- \Gamma_{ i_1\cdots i_{k-1}w} u_s (X_s)\underset{({N_{\gamma}}+1-k-\vert w\vert ) \gamma}{=} 0 \] and the multiplication with $\langle\mathbf{W}_{st},w  \rangle$ gives the desired estimate.
\end{proof} 
\subsection{Continuity equation and analytically weak formulation} \label{sec:RCTE}
\index{stochastic contuity equation} \index{rough!continuity equation} \index{continuity equation}
Given a finite measure $\rho \in \mathcal{M} (\R^n)$ and a continuous bounded function $\phi \in C_b (\R^n)$, we write $\rho (\phi) = \int \phi (x) \rho (dx)$ for the natural pairing. We are interested in measure-valued (forward) solutions to the continuity equation
\[\begin{cases}
    d_t \rho_t = \displaystyle\sum_{i = 1}^d \div_x (f_i (x) \rho_t)\,\mathrm d\bW_t^i   \quad\text{ in }\quad(0,T)\times\R^{n},\\[2mm]
\rho_0=\mu \quad\text{ on }\quad\{0\}\times\R^{n}
\end{cases}
\]
when $\bW$ is again a weakly geometric rough path.  As before we use the notation $\Gamma_i = f_i (x) \cdot D_x$, whose formal adjoint  is $\Gamma_i^\star = -\div_x (f_i{\cdot})$.

\begin{defn}
Let $\gamma\in (0,1)$, $\mathbf{W}\in \Cr_g^{\gamma}$ and $\mu\in  \mathcal{M} (\R^n)$. Any function $\rho\colon [0,T]\to \mathcal{M} (\R^n)$ such that $\rho_0= \mu$ is called a weak or measure-valued solution to the  rough continuity equation
\begin{equation}\label{RCE}
  \mathrm d\rho_t = \sum_{i = 1}^d \div_x (f_i (x) \rho_t)\,\mathrm d\bW^i_t
\end{equation}
if for every $\phi$ bounded in $\CC^{{N_{\gamma}}+1}_b$ and any word $w$ with $|w|\le N_\gamma$ one has  the estimates
\begin{equation}
\rho_t(\Gamma_w\phi)\underset{(N_\gamma+1-|w|)\gamma}{=}\sum_{0\le|v|<N_\gamma+1-|w|}\rho_s(\Gamma_{wv}\phi)\langle\bW_{st},e_v\rangle
  \label{eqn:weaktransport}
\end{equation}
for every $s<t\in [0,T]$ and uniformly in $\phi$.
\end{defn}

\begin{thm} \label{thm:RCE}
  Let $f \in \CC^{2{N_{\gamma}}+1}_b$ and $\bW \in \Cr_g^\gamma$. Given initial data $\mu \in \mathcal{M} (\R^n)$, there exists a unique solution to the measure-valued rough continuity equation, explicitly given for $\phi \in \CC_b^{{N_{\gamma}}+1}$ by
$$
\rho_t (\phi) = \int \phi (X^{0, x}_t) \mu (d x)\;,
$$
where $ X^{0, x}$ is the unique solution of the RDE $\mathrm dX_t = \sum_{i=1}^d f_i(X_t)\,\mathrm d\bW^i_t$ such that $X^{0,x}_0=x$.
\end{thm}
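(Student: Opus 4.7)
The candidate $\rho_t(\phi)\coloneq\int\phi(X^{0,x}_t)\,\mu(dx)$ is engineered precisely so that the weak estimate \eqref{eqn:weaktransport} drops out of the pathwise It\^o--Davie formula \eqref{equ:discrete_Ito} applied to $\phi$ along each trajectory $X^{0,x}$. Since $f\in\CC^{2N_\gamma+1}_b$ has globally bounded derivatives, the remainder in \eqref{equ:discrete_Ito} is uniform in the starting point $x$; integrating the resulting pointwise expansion against the finite measure $\mu$ then yields \eqref{eqn:weaktransport} with a constant depending only on $\phi$ (through its $\CC^{N_\gamma+1}_b$-norm), $f$ (through its $\CC^{2N_\gamma+1}_b$-norm), the rough path norm of $\bW$, and $\mu(\R^n)$.

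\textbf{Uniqueness.} The plan is a dual pairing with the transport equation. Fix $\phi\in\CC^{N_\gamma+1}_b$ and $t\in[0,T]$, and on $[0,t]$ consider $v(s,x)\coloneq\phi(X^{s,x}_t)$: by \Cref{prop:RTEex} this is a regular solution of the rough transport equation with terminal data $\phi$ at time $t$, while the assumption $f\in\CC^{2N_\gamma+1}_b$ ensures via \Cref{cor:partialdavie} that $v(s,\cdot)$ and each $\Gamma_w v(s,\cdot)$ lie in $\CC^{N_\gamma+1-|w|}_b$ with bounds uniform in $s\in[0,t]$. I would then show that $h(s)\coloneq\rho_s(v(s,\cdot))$ is constant on $[0,t]$; evaluating at $s=t$ (where $v_t=\phi$) and $s=0$ (where $v_0(x)=\phi(X^{0,x}_t)$ and $\rho_0=\mu$) identifies $\rho_t(\phi)$ with the claimed formula, which is enough since $\CC^{N_\gamma+1}_b$ is measure-determining for finite measures on $\R^n$. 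Uniform finiteness of $\rho_s(\R^n)$, needed below, is obtained by testing \eqref{eqn:weaktransport} against the constant $\phi\equiv 1$: as $\Gamma_v 1=0$ for $|v|\ge 1$, one gets $\rho_s(\R^n)=\mu(\R^n)$ for every $s$.

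\textbf{Key expansion and main obstacle.} For $0\le s<r\le t$, the continuity estimate \eqref{eqn:weaktransport} with $w=\varepsilon$ applied to the test function $v_r$ gives
\[
\rho_r(v_r)=\sum_{|u|\le N_\gamma}\rho_s(\Gamma_u v_r)\langle\bW_{sr},e_u\rangle+\OO(|r-s|^{(N_\gamma+1)\gamma}),
\]
while the transport estimate \eqref{equ:estimate_sol} with $w=\varepsilon$ for $v$, integrated against $\rho_s$, yields
\[
\rho_s(v_s)=\sum_{|v|\le N_\gamma}\rho_s(\Gamma_v v_r)\langle\bW_{sr},e_v\rangle+\OO(|r-s|^{(N_\gamma+1)\gamma}).
\]
The two principal parts match term by term, leaving $h(r)-h(s)=\OO(|r-s|^{(N_\gamma+1)\gamma})$. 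Since this increment is additive in the pair $(s,r)$ and $(N_\gamma+1)\gamma>1$, a standard sewing/telescoping argument across a partition of $[0,t]$ whose mesh vanishes forces $h$ to be constant, concluding the uniqueness proof. I expect the main obstacle to be ensuring that the transport remainder in the second display is in fact uniform in $x\in\R^n$ (the definition in \Cref{cor:RTEex} posits only local-in-$x$ uniformity), since otherwise it would not survive integration against the possibly non-compactly supported $\rho_s$; this is exactly where the global boundedness $f\in\CC^{2N_\gamma+1}_b$ together with \Cref{cor:partialdavie} enter decisively, upgrading the Davie-type bounds on $\partial^\alpha X^{s,x}_T$, and hence on $\Gamma_w v(s,\cdot)$, from compact-$x$ to global-$x$.
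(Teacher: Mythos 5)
Your proposal is correct and follows essentially the same route as the paper: existence by integrating the pathwise It\^o--Davie expansion (uniform in the starting point thanks to $f\in\CC^{2N_\gamma+1}_b$) against $\mu$, and uniqueness by pairing $\rho_s$ with the regular transport solution $v(s,x)=\phi(X^{s,x}_t)$ and showing $s\mapsto\rho_s(v_s)$ is constant via the cancellation of the two order-$N_\gamma$ expansions plus $(N_\gamma+1)\gamma>1$. Your explicit attention to upgrading the transport remainder from compact-$x$ to global-$x$ uniformity is a point the paper's proof asserts without elaboration, so flagging and resolving it is a genuine refinement rather than a deviation.
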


\begin{proof}
{\it (Existence)} Using the composition of the controlled rough path $\mathbf{X}^{0,x}$ with $\phi \in \CC^{{N_{\gamma}}+1}_b$ and the shorthand notation $X^{0,x}_t=X_t$ we can write
\[\phi(X_t)  \underset{({N}_{\gamma}+1) \gamma}{=} \phi(X_s)  + \sum_{k=1}^{{N_{\gamma}}} \sum_{\vert w\vert=k } \Gamma_w\phi(X_s)\langle\mathbf{W}_{st},w \rangle\,,\]
\[\Gamma_{i_1\cdots i_n}\phi(X_t)   \underset{(N_{\gamma}+1-n) \gamma}{=}\Gamma_{i_1\cdots i_n}\phi(X_s) +\sum_{k=1}^{{N_{\gamma}}-n} \sum_{\vert w\vert=k }\Gamma_{i_1\cdots i_n  w}\phi(X_s) \langle\mathbf{W}_{st},w  \rangle\,.\]
This showing the existence when $\mu = \delta_x$ thanks to Proposition \ref{prop:RTEex}. Since we are dealing with bounded vector fields, all these estimates are uniform in $X_0 = x$. Thus we can integrate both sides with respect to the measure  $\mu$, obtaining the existence.
\medskip

\noindent
{\it (Uniqueness)} To prove the uniqueness, we will show that for any $0<t\leq T$, any function $g \in \CC^{{N_{\gamma}}+1}_b$ and any solution $u\colon [0,t]\times \R^n\to\R$ of the RPDE
\[
\mathrm d u_r = \displaystyle\sum_{i=1}^d \Gamma_i u (r, x)\,\mathrm d\bW^i_r,\quad  u_t=g,
\]
the function $r\in [0,t] \mapsto \alpha(r)\coloneq\rho_r (u_r)$ is constant. This property implies that for any function $g \in \CC^{{N_{\gamma}}+1}_b$ and $t>0$ one has the identity
\[\rho_t (g)=\rho_t (u_t) = \rho_0 (u_0)= \mu(u_0)\]
which uniquely determines the measure $\rho_t$ for any $0<t\leq T$. Since the parameter $T$ was also arbitrary it is not restrictive to prove the result when $t=T$. Then $\alpha$ is constant if and only if one has the estimate
\begin{equation}\label{final_identity}
\alpha(r)\underset{(N_{\gamma}+1) \gamma}{=} \alpha(s)\,.
\end{equation}
Writing  $u_{s,r} = u_r - u_s$ and similarly for $\rho$ one has
\[ \rho_r (u_r) - \rho_s (u_s) = \rho_{s, r} (u_r)+\rho_s (u_{s, r}) \,.\]
By construction of regular solution with $\phi = u_r\in \CC^{{N_{\gamma}}+1}_b$ the first summand expands as
\begin{equation}\label{thm:uni_RCE2}
\rho_{s, r}( u_r) \underset{({N_{\gamma}}+1) \gamma}{=}  \sum_{k=1}^{N_{\gamma}} \sum_{\vert w\vert=k }\rho_s (\Gamma_w u_r)  \langle\mathbf{W}_{sr},w \rangle \,. \end{equation}
On the other hand, we expand the second summand on the right-hand using the very definition of regular backward RPDE obtaining
\[u_{s,r} (x)  \underset{(N_{\gamma}+1) \gamma}{=} -\sum_{k=1}^{{N_{\gamma}}} \sum_{\vert w\vert=k }\Gamma_{ w}u_r (x)  \langle\mathbf{W}_{sr},w \rangle \,,\]
where the remainder is uniform on $x$. By integrating this estimate on $\rho_s$, we obtain
\begin{equation}\label{thm:uni_RCE}
\rho_s (u_{s, r})\underset{(N_{\gamma}+1) \gamma}{=} -\sum_{k=1}^{{N_{\gamma}}} \sum_{\vert w\vert=k }\rho_s(\Gamma_{ w}u_r )  \langle\mathbf{W}_{sr},w \rangle \,.
\end{equation}
Combining the two estimates \eqref{thm:uni_RCE} and \eqref{thm:uni_RCE2} we obtain \eqref{final_identity} and the theorem is proven.
\end{proof}

\bibliographystyle{amsalpha}
\bibliography{refs}

\end{document}